\documentclass[11pt]{article}

\usepackage{amsmath, amsthm, amssymb, amsfonts, mathtools}
\pdfoutput=1
\usepackage[margin=1in]{geometry}
\usepackage{enumitem}
\usepackage{mathrsfs}
\usepackage{graphicx}
\usepackage{xcolor}
\usepackage{hyperref}

\usepackage{float}

\allowdisplaybreaks

\usepackage{titlesec}
\titlespacing*{\section}{0pt}{1.0ex plus 0.5ex minus 0.2ex}{0.8ex}
\titlespacing*{\subsection}{0pt}{0.8ex plus 0.3ex minus 0.2ex}{0.6ex}
\titlespacing*{\subsubsection}{0pt}{0.6ex plus 0.2ex minus 0.1ex}{0.4ex}

\setlist{nosep}

\newtheorem{theorem}{Theorem}[section]
\newtheorem{lemma}[theorem]{Lemma}
\newtheorem{proposition}[theorem]{Proposition}

\newtheorem*{acknowledgements}{Acknowledgements}

\theoremstyle{definition}
\newtheorem{definition}[theorem]{Definition}
\newtheorem{remark}[theorem]{Remark}

\numberwithin{equation}{section}

\makeatletter
\g@addto@macro\th@plain{\thm@preskip=4pt \thm@postskip=4pt}
\g@addto@macro\th@definition{\thm@preskip=4pt \thm@postskip=4pt}
\makeatother

\date{}

\begin{document}
\title{Solutions to Mean Curvature Flow with Uniform Bounds on the Mean Curvature and Its Gradient}
\author{Priyamvada Vishwamitra}

\maketitle

\begin{abstract}
\noindent
In the setting of a complete, smooth properly immersed mean curvature flow, we assume uniformly bounded $|H|$ and $|\nabla H|$ on $M^n\times[0,T)$ and some bounded initial geometry to get local spatial $L^p$ estimates for the second fundamental form with $p\in[4,\infty)$. For $p>n+2$, this leads us to a local space time $L^\infty$ bound for the second fundamental form which allows us to smoothly extend the flow $F:M^n\times [0,T) \rightarrow \mathbb{R}^{n+1}$ past the singular time $T<+\infty$ for a short time. 
\end{abstract}

\section{Introduction}
Let $M$ be a closed, smooth, $n$-dimensional manifold and $F_0:M\rightarrow\mathbb{R}^{n+1}$ be a smooth immersion of $M$ into $\mathbb{R}^{n+1}$. We call a one-parameter family of smooth immersions $F_t(\cdot):=F(\cdot,t):M\rightarrow\mathbb{R}^{n+1}, t\in[0,T)$ a mean curvature flow if it satisfies
\begin{equation}
    \begin{cases}
        \frac{\partial}{\partial t} F(x,t) &= \Vec{H}(x,t),\\
        F(x,0)&=F_0(x)
    \end{cases}
\end{equation}
$\forall x\in M,t\in[0,T)$ where $\Vec{H}(x,t)=-H(x,t)\nu(x,t)$ is the mean curvature vector of the immersed hypersurface $M_t=F_t(M)$ at the point $F(x,t)$ such that $H(x,t)$ is the scalar mean curvature and $\nu(x,t)$ is the chosen unit normal along $M_t$.  

\cite{huisken1984flow} showed that the second fundamental form $A$ blows up at the singular time $T<+\infty$.
\begin{equation*}
    \limsup_{t\rightarrow T}\sup_{x\in M} |A|(x,t) = \infty
\end{equation*}
Then the question is under what curvature conditions can we exclude singularities during the flow? The extension problem mentioned in 2.4.10 of \cite{book} asks whether a uniformly bounded mean curvature on $M^n\times[0,T)$ is sufficient to extend the mean curvature flow (1.1) smoothly beyond time $T$? In most instances, progress toward resolving this problem has relied on imposing additional hypotheses slightly stronger than the mean curvature bound.

\cite{Le_Sesum_2009} showed extension results via a soft reverse Hölder inequality and Moser iteration techniques by assuming an optimal space time integral bound on $H$ and a lower bound $h_{ij}\geq -Bg_{ij}, B\geq 0, B\in\mathbb{R}$. This was key is bounding $|H|$ and arriving at a contradiction after a blow-up argument. To this end, of further interest is \cite{Le_Sesum_2010}, \cite{Le_Sesum_2011}, \cite{Lin_Sesum_2016} in which several extension and regularity theorems are given under various different assumptions for surfaces, solutions developing Type I singularities in arbitrary dimension. For $n=2$, \cite{Li_Wang_2019} confirmed the blow up of $H$ without any further conditions. For $3\leq n\leq 6$ recently, \cite{han2025singularitymeancurvatureflow} investigated singularities of weak MCFs with bounded $H$, bounded Morse index and showed that extension is possible; more precisely that either $H$ or the Morse index blows up at the first singular time. For $n\geq 7$, in general, \cite{stolarski2023existence} proved otherwise i.e that $H$ need not necessarily blow up. A non trivial subset of Velázques's solutions to the MCF is the counter example, they develop finite time singularities with uniformly bounded $H$; furthermore the existence of compact solutions satisfying such geometry was shown by the author. In the paper of \cite{Wang_2022} a uniform bound on $|HA|$ was assumed so as to obtain an extension theorem for (1.1). There the quantity $HA$ was interesting because it describes the evolution of the metric in mean curvature flow. It was also pointed out to use the following evolution equation
\begin{equation}
    \partial_t |A|^2 = 2(\nabla^2 H \cdot A + H\cdot \text{tr}(A^3)); 
\end{equation} 
in Lemma 2.2 we give a derivation of (1.2). Using this, a differential inequality for $\int_M |A|^p$ was computed; after integration and applying Moser iteration in the regime of $p>n+2,\: \: \sup |A|$ was locally bounded in terms of the initial geometry and the uniform $|HA|$ bound. 
The original inspiration comes from \cite{KOTSCHWAR20162604} which proved extension results for the Ricci flow: a bound on the Ricci curvature implies a polynomial growth of the Riemannian curvature due to the evolution equations $\partial_t |Rm|^2 = \nabla^2 Ric\ast Rm + Ric \ast Rm \ast Rm, \partial_t g = -2Ric(g)$.
Here, in this paper, we work with information related to the mean curvature alone as our hypotheses. In section 3, we assume uniform space time bounds on $|H|$, $|\nabla H|$ and calculate spatial $L^p$ bounds for $|A|$ - similar to \cite{Wang_2022} - in the case of compact, smooth, immersed MCFs. In section 4, we obtain a localized version of the estimate for complete, smooth, properly immersed MCFs. In section 5, the local space time supremum bound on $|A|$ is computed following Moser iteration techniques from \cite{Le_Sesum_2009}. We make the following abuse of notation to understand what integration on the hypersurface means: Consider $M_t\cap B_r(x_0)$ 
for any $t\in [0,T)$ around a point $x_0\in\mathbb{R}^{n+1}$ and radius $r>0$, then  
\begin{equation*}
    \int_{M_t\cap B_r(x_0)}|A|^2(t) d\mu_t = \int_{F_t^{-1}(B_r(x_0))}|A|^2(t) d\mu_t =: \lVert A(t) \rVert_{2,F_t^{-1}(B_r(x_0))}^2
\end{equation*}
 see Definition 2.3 for the exact definition of the norms. On the domain manifold, we can take a pull-back of this patch and express 
\begin{equation*}
    M\supset\Omega_t:=F_t^{-1}(B_r(x_0))
\end{equation*} 
as some open set. We define this pre-image set in Definition 2.5. Despite the fact that this pre-image can have multiple connected components, we have the following set-level inclusion that immediately gives a control on the volume of such sets because they satisfy the evolution equation of the measure under the mean curvature flow
\begin{equation*}
    \Omega_t \subseteq \hat{\Omega}_0 \Rightarrow \text{vol}_{g_t}(\Omega_t) \leq \text{vol}_{g_0}(\hat{\Omega}_0).
\end{equation*}
Here $\hat{\Omega}_0=F_0^{-1}(B_{r+CT}(x_0))$ is the pre-image at the initial time of a slightly larger ball. This follows from $|H|(x,t)\leq C,\forall x\in M, t\in [0,T)$ which gives a quantitative bound for the speed of the evolution; this means a point cannot move more than linearly at a time. The details of this is given in Proposition 2.6. Our main theorems are, 
\begin{theorem}
Let $F:M^n \times [0,T)\rightarrow\mathbb{R}^{n+1}$ be a complete, smooth, properly immersed mean curvature flow. Assume the following uniform bounds \[ \sup_{M^n \times [0,T)} |H|(x,t) \leq c_1 < \infty , \quad \quad \quad \sup_{M^n \times [0,T)} |\nabla H|(x,t) \leq c_2 < \infty .\] Let $p\in[4,\infty), x_0\in\mathbb{R}^{n+1}, 0<r\leq 1, K:=Tc_1, \hat{\Omega}_0:=F_0^{-1}(B_{r+K}(x_0))$. Define \[ A_0:=\lVert A(0) \rVert_{p,\hat{\Omega}_0}, \quad \quad V_0:=\text{vol}_{g_0}(\hat{\Omega}_0)=\int_{F_0^{-1}(B_{r+K}(x_0))} d\mu_0.\] If $A_0<\infty, V_0<\infty$, then there exists constants $\tilde{k}=\tilde{k}(n,p,c_1,c_2)$, $\tilde{C}_1=\tilde{C}_1(p,c_1,c_2,r)$ such that for every $t\in(0,T)$,
    \[\int_{M_t\cap B_{r/2}(x_0)} |A|^p(t) d\mu_t \equiv \int_{F_t^{-1}(B_{r/2}(x_0))}|A|^p(t) d\mu_t \leq 2e^{\tilde{k}t}\biggl(\int_{F_0^{-1}(B_{r+K}(x_0))} |A|^p(0) d\mu_0 + \tilde{C}_1V_0\biggr).\]
\end{theorem}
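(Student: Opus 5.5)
We localize the compact-case computation of Section 3 with a spatial cutoff and then close with Grönwall. Fix $x_0$ and $r\le 1$. Let $\varphi=\psi(|F-x_0|^2)$, where $\psi$ is a smooth cutoff equal to $1$ on $[0,r^2/4]$ and vanishing on $[r^2,\infty)$, with $|\psi'|\le C/r^2$ and $|\psi''|\le C/r^4$. Composed with $F_t$, $\varphi$ is supported in $\Omega_t=F_t^{-1}(B_r(x_0))$. Its derivatives satisfy $|\nabla\varphi|\le C/r$, $|\nabla^2\varphi|\le C(1+|A|)/r^2$ (since $\nabla^2|F|^2=2g-2\langle F,\nu\rangle A$ up to sign), and $|\partial_t\varphi|\le Cc_1/r$ (since $\partial_tF=\vec H$ and $|H|\le c_1$).

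\textbf{Step 1: the localized differential inequality.} Set $I(t)=\int_M\varphi^{q}|A|^p\,d\mu_t$, with a power $q\ge p$ (say $q=p$) so that negative powers of $\varphi$ generated by integrating by parts are absorbed. Use (1.2) and Lemma 2.2 together with $\partial_t d\mu=-H^2d\mu$ to get
\[
\frac{d}{dt}I=\int \varphi^q\Bigl(\tfrac p2|A|^{p-2}\,2(\nabla^2H\cdot A+H\,\mathrm{tr}A^3)\Bigr)-\int\varphi^q|A|^pH^2+\int q\varphi^{q-1}\partial_t\varphi\,|A|^p .
\]
The $H^2$ term is nonpositive and can be dropped. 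The cubic term is bounded by $pc_1\int\varphi^q|A|^{p+1}$; this is the dangerous one, because it carries one extra power of $|A|$. The $\nabla^2H$ term is integrated by parts once, which moves the derivative onto $\varphi^q|A|^{p-2}A$ and produces
\[
\int\varphi^q|A|^{p-2}|\nabla H||\nabla A|\quad\text{and}\quad\int\varphi^{q-1}|\nabla\varphi||\nabla H||A|^{p-1}.
\]
The second of these is bounded by $C(c_2,r)\int\varphi^{q-1}|A|^{p-1}$.

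\textbf{Step 2 (main obstacle): controlling $|A|^{p+1}$ and $|\nabla A|$ without a good sign.} Since $\partial_t|A|^2$ in (1.2) has no Laplacian term, there is no gradient term with a good sign to absorb anything. Following Section 3 (which I am assuming already handles this in the compact case), I would instead use the Codazzi identity together with the Simons-type identity $\nabla^2H=\Delta A+A\ast A\ast A$ to trade the $\nabla^2H\cdot A$ term. Integrating $\langle\Delta A,\varphi^q|A|^{p-2}A\rangle$ by parts yields $-\int\varphi^q|A|^{p-2}|\nabla A|^2$ (a good term) plus cutoff errors, at the cost of a term $\int\varphi^q|A|^{p+2}$. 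Combined with $\nabla^2H\cdot A$, this should leave a signed $|\nabla A|$ term against which the $|\nabla H||\nabla A|$ term is absorbed via Young: $|A|^{p-2}|\nabla H||\nabla A|\le\epsilon|A|^{p-2}|\nabla A|^2+C_\epsilon c_2^2|A|^{p-2}$.

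The quartic curvature term is the real difficulty. Presumably the restriction $p\ge4$ enters here: use $|H|\le c_1$ and the identity $\mathrm{tr}A^3$ type contractions containing $H$, so that every top-order term is at most $c_1|A|^{p+1}$ and no genuine $|A|^{p+2}$ survives. Then $|A|^{p+1}$ is reduced, via the Michael--Simon Sobolev inequality applied to $\varphi^{q/2}|A|^{p/2}$ together with the gradient term, to $\epsilon$ times the gradient term plus $C\int\varphi^{q}|A|^p$ plus a constant. If this absorption fails, I would fall back to exactly the mechanism of Section 3 and only add cutoff error terms to it.

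\textbf{Step 3: Grönwall and the volume bound.} All lower-order terms $\varphi^{q-j}|A|^{p-k}$ are bounded by Young's inequality by $\int\varphi^q|A|^p+C(p,c_1,c_2,r)\,\mathrm{vol}_{g_t}(\Omega_t)$. Proposition 2.6 gives $\mathrm{vol}_{g_t}(\Omega_t)\le V_0$, because the speed bound $|H|\le c_1$ forces $\Omega_t\subseteq\hat\Omega_0$. This yields
\[
I'\le\tilde kI+\tilde C_1V_0,
\]
so Grönwall gives $I(t)\le e^{\tilde kt}(I(0)+\tilde C_1V_0t)$. Finally, $I(0)\le\int_{\hat\Omega_0}|A|^p(0)$ and $\varphi\equiv1$ on $F_t^{-1}(B_{r/2})$, which gives the stated bound with the factor $2$ absorbing $t\le T$ after rescaling constants.
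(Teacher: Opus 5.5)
Your Steps 1 and 3 (spatial cutoff, one integration by parts of the Hessian term, $\mathrm{vol}_{g_t}(\Omega_t)\le V_0$ via Lemma 2.8, Gr\"onwall) are in line with the paper. Step 2, however, carries the entire content of the theorem, and it does not work as proposed.

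Trading $\nabla^2H\cdot A$ via Simons' identity $\Delta h_{ij}=\nabla_i\nabla_jH+Hh_{ik}h_{kj}-|A|^2h_{ij}$ gives $\nabla^2H\cdot A+H\,\mathrm{tr}(A^3)=\tfrac12\Delta|A|^2-|\nabla A|^2+|A|^4$. That is, you are simply back at Huisken's equation (2.3). The reaction term $|A|^4$ carries no factor of $H$, so a genuine $+p\int\varphi^q|A|^{p+2}$ does survive, contrary to your hope.

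The Michael--Simon fallback cannot rescue this. Interpolating $\int\varphi^q|A|^{p+1}$ or $\int\varphi^q|A|^{p+2}$ between $L^2$ and the Sobolev exponent of $u=\varphi^{q/2}|A|^{p/2}$ yields at best $\epsilon\int|\nabla u|^2+C\bigl(\int u^2\bigr)^{1+\alpha}$ with $\alpha>0$, and only when $p$ is large compared with $n$. The result is a superlinear inequality $I'\le C(1+I)^{1+\alpha}$. That controls $I$ only on a time interval depending on $I(0)$, not on all of $(0,T)$ with $e^{\tilde kt}$ growth. Note also that estimating the cubic term by $pc_1\int\varphi^q|A|^{p+1}$ discards the factor $|H|$, which is exactly the structure the paper uses.

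The missing idea is to move the bad terms into a modified energy, using evolution equations as exact time derivatives rather than trying to absorb them.

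First, keep the $H$: $|A|^{p+1}|H|\le\epsilon|A|^{p+2}H^2+C_\epsilon|A|^p$. Observe that $2|A|^2H^2$ is precisely the reaction term in (2.2), $(\partial_t-\Delta)H^2=-2|\nabla H|^2+2|A|^2H^2$. Multiplying (2.2) by $|A|^p\phi_t$ and integrating gives $2\int|A|^{p+2}H^2\phi_t=\frac{d}{dt}\int|A|^pH^2\phi_t+R$. Integrate by parts the $\Delta H^2$ term and the $\nabla^2H$ term hidden in $\partial_t|A|^p$, and use $|H|\le\sqrt n|A|$ together with the bounds on $|H|$ and $|\nabla H|$. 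Then $R$ is bounded by a reabsorbable fraction of $\int|A|^{p+2}H^2\phi_t$, plus $C\int|A|^{p-4}|\nabla A|^2\phi_t+C\int|A|^p\phi_t$, plus cutoff terms.

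Second, split the gradient term two powers down: $|A|^{p-2}|\nabla A||\nabla H|\le\tfrac14|A|^{p-4}|\nabla A|^2+|A|^p|\nabla H|^2$. Then bound $\int|A|^{p-4}|\nabla A|^2\phi_t$ by $-c\,\frac{d}{dt}\int|A|^{p-2}\phi_t+c\int|A|^p\phi_t$ plus cutoff terms, using (2.3) for the \emph{lower} power $|A|^{p-2}$. There the reaction term is $|A|^{p-4}\cdot|A|^4=|A|^p$, which is harmless; this is where $p\ge4$ is needed.

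Combining the two steps under the paper's normalisation $|H|\le1/\sqrt2$, $|\nabla H|\le1$ yields $\frac{d}{dt}\int\bigl(|A|^p+C_1|A|^{p-2}-|A|^pH^2\bigr)\phi_t\le k_1\int|A|^p\phi_t+k_2r^{-(p+1)}V_0$. Since $H^2\le\frac12$, the modified energy is at least $\frac12\int|A|^p\phi_t$, so Gr\"onwall closes as in your Step 3.
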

\begin{theorem}
Let $F:M^n\times[0,T)\rightarrow\mathbb{R}^{n+1}$ be a complete, smooth, properly immersed mean curvature flow. Assume that the uniform bounds on $|H|, |\nabla H|$ from theorem 1.1 hold. Let $s>n+2, T\geq 1, K:= Tc_1, x_0\in\mathbb{R}^{n+1}, \hat{\Omega}_0 := F^{-1}_0(B_{2+K}(x_0))$. Define \[ A_0:= \lVert A(0) \rVert_{s,\hat{\Omega}_0}, \quad \quad V_0:=\text{vol}_{g_0}(\hat{\Omega}_0)=\int_{F_0^{-1}(B_{2+K}(x_0))} d\mu_0.\] If $A_0<\infty, V_0<\infty$, then there exists constants $\tilde{E}=\tilde{E}(n,s,T,c_1,V_0), \tilde{k}=\tilde{k}(n,s,c_1,c_2)$ and $\tilde{C}_1=\tilde{C}_1(s,c_1,c_2)$ such that \[ \sup_{t\in[\frac{1}{2},1]}\sup_{F_t^{-1}(B_{1/2}(x_0))} |A| \leq \tilde{E}\biggl\{1+\left( \int_{F_0^{-1}(B_{2+K}(x_0))}|A|^s(0) d\mu_0 + \tilde{C}_1V_0\right)^{\tilde{s}}\left(\int_0^1 e^{\tilde{k}t} dt\right)^{\tilde{s}}\biggr\} \]
     where $\tilde{s}=\tilde{s}(n,s)$.
\end{theorem}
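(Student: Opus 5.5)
We plan to derive Theorem 1.2 from Theorem 1.1 (with $p=s$) by a parabolic Moser iteration in the style of \cite{Le_Sesum_2009}, applied to the differential inequality satisfied by $|A|^2$ under mean curvature flow.

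\textbf{Step 1: reduce to an $L^s$ bound on a space-time cylinder.} Apply Theorem 1.1 with $p=s$ and $r=1$ (admissible since $0<r\le 1$). Since $T\geq 1$, for every $t\in(0,1]$ this gives
\[\int_{F_t^{-1}(B_{1/2}(x_0))}|A|^s(t)\,d\mu_t\le 2e^{\tilde k t}\Bigl(A_0^s+\tilde C_1 V_0\Bigr).\]
The statement uses the ball $B_{2+K}$ rather than $B_{1+K}$, so we would apply Theorem 1.1 either with the center rescaled or with a covering by smaller balls. Alternatively we can apply it directly on $B_1(x_0)$, whose preimage at time $0$ is $F_0^{-1}(B_{1+K})\subset\hat\Omega_0$; this is the cleaner choice. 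Integrating in $t$ over $[0,1]$ then gives
\[\|A\|^s_{L^s(Q)}\le 2\bigl(A_0^s+\tilde C_1V_0\bigr)\int_0^1 e^{\tilde k t}\,dt\]
on the cylinder $Q=\bigcup_{t\in[0,1]}F_t^{-1}(B_{1/2})\times\{t\}$. If the Moser iteration needs more room, we shrink the final cylinder further and rescale radii.

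\textbf{Step 2: subsolution inequality and Sobolev.} The function $f=|A|^2$ satisfies Simons' inequality
\[(\partial_t-\Delta)f\le -2|\nabla A|^2+2|A|^2 f\le |A|^2 f,\]
so $f$ is a subsolution of $(\partial_t-\Delta)f\le b f$ with potential $b=2|A|^2$. Step 1 gives $b\in L^{s/2}(Q)$, and $s/2>(n+2)/2$ is exactly the threshold that makes Moser iteration close in parabolic dimension $n+2$.

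We then run the iteration. Take cutoffs $\phi$ built from $|F-x_0|^2$, which behave well under MCF since $(\partial_t-\Delta)|F-x_0|^2=-2n$, together with time cutoffs. Test with $\phi^2 f^{\beta}$ and use the Michael--Simon Sobolev inequality
\[\Bigl(\int |u|^{\frac{n}{n-1}}\Bigr)^{\frac{n-1}{n}}\le C(n)\int\bigl(|\nabla u|+|H||u|\bigr).\]
The $|H|$ term is absorbed using $|H|\le c_1$ and the volume bound $\mathrm{vol}_{g_t}(\Omega_t)\le V_0$ from Proposition 2.6. Interpolating between $L^\infty_tL^1_x$ and $L^2_tW^{1,2}_x$ gives the gain $\|f^{\beta}\phi\|_{L^{2(n+2)/n}}$.

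\textbf{Step 3: handle the potential term.} The term $\int b\,\phi^2 f^{\beta+1}$ is bounded by Hölder's inequality with $\|b\|_{L^{s/2}}$, then by interpolation plus Young's inequality, using $s/2>(n+2)/2$. This is the standard reverse-Hölder step, and we expect it to be the main obstacle. The coefficient's size enters the iterated constants polynomially, which produces the factor $(\cdot)^{\tilde s}$ with $\tilde s=\tilde s(n,s)$. The constants must also stay uniform in $t$ even though the domains $\Omega_t$ move; the linear-speed inclusion from Proposition 2.6 handles this.

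\textbf{Step 4: iterate and convert.} Iterating over the exponents $\beta_k\to\infty$ and nested cylinders from $[0,1]\times B_{1}$ down to $[\tfrac12,1]\times B_{1/2}$, we expect
\[\sup|A|^2\le C\bigl(1+\|b\|_{L^{s/2}}\bigr)^{\gamma}\|f\|_{L^{s/2}(Q)}\]
with $C=C(n,s,c_1,V_0,T)$. Expressing both norms through Step 1 and taking square roots yields the stated bound, after absorbing the powers into $\tilde E$ and $\tilde s$.
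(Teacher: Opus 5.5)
Your overall strategy matches the paper's proof of Theorem 5.4: $L^s$ control from Theorem 1.1 with $p=s$, the subsolution inequality $(\partial_t-\Delta)|A|^2\le 2|A|^2\cdot|A|^2$ with potential $b=2|A|^2\in L^{s/2}$ and $s/2>(n+2)/2$, and the Le--Sesum Moser iteration built on Michael--Simon with $|H|\le c_1$ and cutoffs $\psi(|F-x_0|^2)\gamma(t)$.

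\textbf{The gap: radius bookkeeping in Step 1.} Theorem 1.1 at $x_0$ with $r=1$ controls only $\int_{F_t^{-1}(B_{1/2}(x_0))}|A|^s$, so your $Q$ lives over $B_{1/2}$. Step 4, like Lemma 5.3, starts the iteration on the strictly larger region $\{(x,t):t\in[0,1],\ x\in F_t^{-1}(B_1(x_0))\}$, and it needs both $\|b\|_{L^{s/2}}$ and the starting norm there. A local Moser iteration always loses room. From $Q$ you would therefore only get a sup bound over $F_t^{-1}(B_{1/4}(x_0))$ (your ``shrink the final cylinder''), which is weaker than the statement.

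The enlarged ball $B_{2+K}$ in the hypothesis is exactly what pays for this margin. Cover $B_1(x_0)$ by $N=N(n)$ balls $B_{1/2}(y_i)$ with $|y_i-x_0|\le 1$. Apply Theorem 1.1 with $r=1$ centered at each $y_i$, and use $B_{1+K}(y_i)\subset B_{2+K}(x_0)$, to get
\[
\int_{F_t^{-1}(B_1(x_0))}|A|^s(t)\,d\mu_t\le 2Ne^{\tilde k t}\Bigl(\int_{\hat\Omega_0}|A|^s(0)\,d\mu_0+\tilde C_1V_0\Bigr),\qquad t\in(0,1].
\]
So the covering option you mentioned and then set aside is the one you need, not the ``cleaner'' choice. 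The paper gets the same bound in one step by applying (4.10) with $r=2$, so that $B_{r/2}=B_1$ and $B_{r+K}=B_{2+K}$. That is formally outside the range $0<r\le 1$ of Theorem 4.1, though it only changes $\tilde C_1$. Your covering version stays inside the stated hypotheses.

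\textbf{Two smaller points.} First, suppose ``absorbed using the volume bound'' in Step 2 means H\"older: $c_1\int u\,d\mu_t\le c_1V_0^{1/n}\|u\|_{n/(n-1)}$. This can be absorbed into the left side only when $c_nc_1V_0^{1/n}<1$, which is not assumed. Instead, keep it as a lower-order term and interpolate between $L^2$ and $L^{2n/(n-2)}$ as in Proposition 5.1. That gives $\|w\|_{2n/(n-2)}^2\le c_n\bigl(\|\nabla w\|_2^2+c(n,c_1)\|w\|_2^2\bigr)$ with no volume dependence. Second, the differential inequality should read $(\partial_t-\Delta)f\le 2|A|^2 f$, not $|A|^2 f$. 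Starting the iteration at $\beta_0=s/2$ is a harmless variant of the paper's $\beta_0=2$ followed by H\"older against $\mathrm{vol}_{g_t}(F_t^{-1}(B_1(x_0)))\le V_0$.
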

\begin{theorem}
   Let $F:M^n\times[0,T)\rightarrow\mathbb{R}^{n+1}$ be a complete, smooth, properly immersed mean curvature flow. Assume that the uniform bounds on $|H|, |\nabla H|$ from theorem 1.1 hold. Let $s>n+2, T> 1, \rho>2\sqrt{2}T$ be given. For all $x_0\in\mathbb{R}^{n+1}$ define $\hat{\Omega}_0(x_0):= F_0^{-1}(B_{\rho}(x_0))$, \[ A_0(x_0):=\lVert A(0) \rVert_{s,\hat{\Omega}_0(x_0)}, \quad \quad V_0(x_0):=\text{vol}_{g_0}(\hat{\Omega}_0(x_0))=\int_{F_0^{-1}(B_{\rho}(x_0))}d\mu_0.\] If $\mathcal{A}:=\sup_{x_0\in\mathbb{R}^{n+1}}A_0(x_0)<\infty$,$\mathcal{V}:=\sup_{x_0\in\mathbb{R}^{n+1}}V_0(x_0)<\infty$, then $\exists \kappa=\kappa(n,s,T,c_1,c_2,\rho,\mathcal{A},\mathcal{V})$ and a $T_0=T_0(n,\rho,\kappa)>0$ such that the flow admits a  complete, smooth, properly immersed extension on the interval $[T,T+T_0)$, which is unique among the class of such extensions with $\sup_{M^n\times[T/2,T+T_0)}|A|<\infty$.
\end{theorem}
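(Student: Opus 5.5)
The plan is to turn the local estimate of Theorem 1.2 into a uniform bound on $|A|$ on $M\times[T/2,T)$. Once that bound is in hand, the extension follows from the standard short-time existence theory for complete, properly immersed mean curvature flows with bounded second fundamental form.

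\textbf{Step 1: uniform curvature bound near $T$.} Fix $x_0\in\mathbb{R}^{n+1}$ and a time $t_1\in[T/2,T)$. Since Theorem 1.2 is stated on the unit time window $[\tfrac12,1]$, I will translate time so that it applies to the flow restricted to $[t_1-\tfrac12,\,t_1+\tfrac12]\cap[0,T)$, using $T>1$. The initial data for this restarted flow lives at time $t_1-\tfrac12$, so I first need $\lVert A(t_1-\tfrac12)\rVert_{s}$ and the volume on a ball of radius $2+c_1$ to be bounded uniformly in $x_0$.

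\begin{itemize}
\item For the $L^s$ norm, I apply Theorem 1.1 with the original initial time $0$ and radius $r=1$ (rescaling the radius if needed). The enlargement $r+K=1+Tc_1$ must fit inside $B_\rho(x_0)$; the hypothesis $\rho>2\sqrt2 T$ is presumably chosen so that this containment, and the one in the next step, hold once the constants are arranged. This gives a bound by $2e^{\tilde kT}(\mathcal A^s+\tilde C_1\mathcal V)$, uniform in $x_0$.
\item For the volume, Proposition 2.6 (the speed bound $|H|\le c_1$) gives $F_t^{-1}(B_R(x_0))\subseteq F_0^{-1}(B_{R+c_1t}(x_0))$. The measure is nonincreasing, so $\mathrm{vol}_{g_t}\le\mathcal V$.
\end{itemize}

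Covering the ball of radius $2+c_1$ by finitely many balls of radius $1/2$, with a number depending only on $n$ and $\rho$, transfers these bounds to the larger ball. Theorem 1.2 then gives $\sup_{F_{t_1}^{-1}(B_{1/2}(x_0))}|A|\le\kappa$, with $\kappa$ depending on $n,s,T,c_1,c_2,\rho,\mathcal A,\mathcal V$. Because $x_0$ and $t_1$ are arbitrary, $\sup_{M\times[T/2,T)}|A|\le\kappa$.

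\textbf{Step 2: a limit immersion $F_T$.} Uniform $|A|\le\kappa$ on $[T/2,T)$ yields, by the Shi-type interior derivative estimates of Ecker--Huisken, bounds $|\nabla^mA|\le C_m(\kappa,n)$ on $[3T/4,T)$. Since $|\partial_tF|=|H|\le c_1$, the maps $F_t$ converge uniformly to some $F_T$. The metric satisfies $\partial_tg=-2HA$, which is bounded, so the $g_t$ are uniformly equivalent and converge to a metric $g_T$; hence $F_T$ is an immersion. The higher derivative bounds give smooth convergence, so $F_T$ is smooth with $|A|\le\kappa$. It is proper and complete because $F_T$ is within $c_1T$ of $F_0$ and $g_T$ is uniformly equivalent to $g_0$.

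\textbf{Step 3: extension and uniqueness.} Short-time existence for complete immersed hypersurfaces with bounded $|A|$ (Ecker--Huisken, \emph{Interior estimates for hypersurfaces moving by mean curvature}) produces a flow from $F_T$ on $[T,T+T_0)$. Its existence time depends only on $n$ and $\kappa$, via the doubling-time estimate $\partial_t|A|^2\le\Delta|A|^2+2|A|^4$; dependence on $\rho$ enters only through the local graph radius. Concatenating at $t=T$ gives a smooth flow, since all time derivatives match: they are determined by spatial derivatives through the equation. For uniqueness, I would invoke the uniqueness theorem for complete flows with bounded second fundamental form (Chen--Yin) on $[T/2,T+T_0)$.

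\textbf{Main obstacle.} I expect Step 1 to be the hardest part: arranging the radii and the time translation so that every enlarged ball demanded by Theorems 1.1 and 1.2 lies inside $B_\rho(x_0)$, uniformly in $x_0$ and $t_1$. This is the reason for the hypothesis $\rho>2\sqrt2T$. The delicate point is that the restarted flow's initial radius $2+K$ must be controlled by norms on balls of radius about $1$ at earlier times. I would try first to apply Theorem 1.1 on balls of radius $\rho/2$ and then cover.
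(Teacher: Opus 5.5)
Your proposal is correct. It shares the paper's skeleton: a global bound on $|A|$ near $T$ from Theorem 1.2, interior derivative estimates and a smooth limit $F_T$, Ecker--Huisken short-time existence, and Chen--Yin uniqueness. Your Step 1, however, is organized differently and more economically.

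The paper bootstraps forward through overlapping windows $I_j=[\tfrac j2,\tfrac{j+2}{2}]$. At each stage it converts the pointwise bound at $t_{j+1}$ back into an $L^s$ bound on $F_{t_{j+1}}^{-1}(B_\rho(x_0))$ using the volume bound of Lemma 2.8. It then re-applies Theorem 5.4 on the next window, and finishes $[\tfrac{N+2}{2},T)$ with backward windows $[t-1,t]$.

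You instead use that Theorem 1.1 with $r=1$ already bounds $\lVert A(\tau)\rVert_{s}$ on $F_\tau^{-1}(B_{1/2}(y))$ for every $\tau<T$ and every $y$. The bound is in terms of initial data only, and it is uniform in $y$ because $\mathcal A,\mathcal V$ are suprema over all centers. A covering then supplies the restart data for Theorem 1.2 at any time in one step.

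This buys two things:
\begin{itemize}
\item It avoids compounding constants over the $N(T)$ restarts.
\item Your bound holds at every point of $M$ for each $t\in[T/2,T)$, so you never need the paper's trajectory argument in its Step 3. That argument, not the containment of enlarged balls, is where $\rho>2\sqrt2T$ is actually used: it keeps $F(x,t)\in B_{\rho/8}(F(x,t_0))$ for $t\in(T/2,T)$.
\end{itemize}
With covering, your route works for any $\rho>0$, with constants depending on $\rho$ only through covering numbers. Your fallback of applying Theorem 1.1 on balls of radius $\rho/2$ is unnecessary, and it is not allowed as stated, since Theorem 1.1 requires $r\le 1$.

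Three small repairs are needed.
\begin{itemize}
\item Restarting at $t_1-\tfrac12$ places $t_1$ at the left end of the window $[\tfrac12,1]$. Theorem 1.2 would then need the flow up to $t_1+\tfrac12$, which fails for $t_1>T-\tfrac12$. Restart at $t_1-1$ instead, as in the paper's backward windows. When $t_1<1$, restart at $0$; there $t_1\in(\tfrac12,1)$ already, since $t_1\ge T/2>\tfrac12$.
\item $g_T$ is uniformly equivalent to $g_{T/2}$, not to $g_0$, because nothing bounds $|A|$ near $t=0$. Completeness of $F_T$ follows from properness anyway: the intrinsic ball of radius $R$ about $p$ lies in the compact set $F_T^{-1}(\overline{B}_R(F_T(p)))$.
\item State explicitly that the extension is proper. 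Its mean curvature is bounded by $\sqrt n\sup|\tilde A|$, so the speed argument of Proposition 2.6 applies, as in the paper's Step 7.
\end{itemize}
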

In this paper these theorems appear as Theorem 4.1, Theorem 5.4 and Theorem 5.6 respectively.
\begin{acknowledgements}
    I thank Prof. Dr. Miles Simon for many helpful, insightful discussions in this direction along with thorough feedbacks.
\end{acknowledgements}
\section{Preliminaries}
Let $F:M^n\times [0,T) \rightarrow \mathbb{R}^{n+1}$ be a closed, smooth, immersed mean curvature flow. We define the time dependent induced metric, volume form and second fundamental via the pull back of $F_t$,
\begin{align*}
    g(\cdot, t) &\equiv g_t(\cdot) := F_t^*\delta_{\text{Eucl}}\\
    A(\cdot, t) &\equiv A_t(\cdot) := (\Bar{\nabla} dF_t)^\perp
\end{align*}
which we write locally as 
\begin{align*}
    g_{ij}(x,t) &=\langle \partial_iF_t(x),\partial_j F_t(x)\rangle\\
    A_{ij}(x,t) &= \langle \Bar{\nabla}_{{\partial_i}F_t(x)}\nu(x),\partial_jF_t(x)\rangle \\
    d\mu_t \equiv &\sqrt{g_t} dx= \sqrt{\det g_{ij}(x,t)}dx
\end{align*}
where $\Bar{\nabla}$ is the standard covariant derivative from the ambient $\mathbb{R}^{n+1}$. We now recall the well known evolution equations:
\begin{lemma}
    (Corollaries 3.5,3.6, Huisken,\cite{huisken1984flow}). For a closed, smooth immersed mean curvature flow we have:
    \begin{align}
        \partial_t d\mu_t &= -H^2 d\mu_t\\
    \partial_t H^2 = \Delta H^2 &- 2|\nabla H|^2 + 2|A|^2H^2\\
        \partial_t |A|^2 = \Delta &|A|^2 - 2|\nabla A|^2 + 2|A|^4
    \end{align}
\end{lemma}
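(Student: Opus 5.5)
The three identities in Lemma 2.1 are local, pointwise statements, so I will verify them in a coordinate chart on $M$ at a fixed time, using normal coordinates for $g_t$ at the point where convenient. The basic ingredient is the evolution of the metric under the flow. Differentiating $g_{ij}=\langle\partial_iF,\partial_jF\rangle$ in time and using $\partial_t F=-H\nu$ together with $\langle\partial_iF,\nu\rangle=0$ gives
\[
\partial_t g_{ij}=\langle\partial_i(-H\nu),\partial_jF\rangle+\langle\partial_iF,\partial_j(-H\nu)\rangle=-2HA_{ij},
\]
where I use $\langle\partial_i\nu,\partial_jF\rangle=A_{ij}$, which is the paper's convention. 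From Jacobi's formula $\partial_t\sqrt{\det g}=\tfrac12\sqrt{\det g}\,g^{ij}\partial_t g_{ij}=-H^2\sqrt{\det g}$, and this proves (2.1).

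For the second fundamental form I will first compute the normal evolution $\partial_t\nu=\nabla H$, i.e. $\partial_t\nu=g^{ij}\partial_iH\,\partial_jF$. This holds because $\langle\partial_t\nu,\nu\rangle=0$ and
\[
\langle\partial_t\nu,\partial_jF\rangle=-\langle\nu,\partial_j(-H\nu)\rangle=\partial_jH.
\]
Differentiating $A_{ij}=-\langle\nu,\partial_i\partial_jF\rangle$ (the sign fixed to match the convention above) then yields
\[
\partial_tA_{ij}=\nabla_i\nabla_jH-HA_{ik}A^k{}_j,
\]
with the sign determined by the conventions above. Simons' identity gives
\[
\Delta A_{ij}=\nabla_i\nabla_jH+HA_{ik}A^k{}_j-|A|^2A_{ij},
\]
and combining the two turns this into the parabolic form
\[
\partial_tA_{ij}=\Delta A_{ij}-2HA_{ik}A^k{}_j+|A|^2A_{ij}.
\]
Tracing with $g^{ij}$ and using $\partial_tg^{ij}=2HA^{ij}$ gives $\partial_tH=\Delta H+|A|^2H$. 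From this, $\partial_tH^2=2H\Delta H+2|A|^2H^2=\Delta H^2-2|\nabla H|^2+2|A|^2H^2$, which is (2.2).

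For (2.3) I will expand
\[
\partial_t|A|^2=\partial_t\big(g^{ik}g^{jl}A_{ij}A_{kl}\big)=4HA^{ij}A_{jk}A^k{}_i+2A^{ij}\partial_tA_{ij}.
\]
Inserting the parabolic form, the terms $\pm4H\,\mathrm{tr}(A^3)$ cancel, leaving $2A^{ij}\Delta A_{ij}+2|A|^4$. Then $\Delta|A|^2=2A^{ij}\Delta A_{ij}+2|\nabla A|^2$ gives (2.3).

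I expect the main obstacle to be Simons' identity, together with consistent sign bookkeeping. For Simons' identity I would commute covariant derivatives using Codazzi (symmetry of $\nabla A$) and the Gauss equation $R_{ijkl}=A_{ik}A_{jl}-A_{il}A_{jk}$. Since these are the standard results of Huisken's Corollaries 3.5 and 3.6, it would also be acceptable simply to cite \cite{huisken1984flow}.
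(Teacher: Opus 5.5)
Your derivation is correct. With the paper's conventions ($\partial_t F=-H\nu$, $A_{ij}=\langle\partial_i\nu,\partial_jF\rangle$) one gets $\partial_t g_{ij}=-2HA_{ij}$, $\partial_t\nu=\nabla H$ and $\partial_tA_{ij}=\nabla_i\nabla_jH-HA_{ik}A^k{}_j$ (the first and last agree with the formulas the paper invokes in Lemma 2.2), and Simons' identity then yields all three equations with the signs as stated. The paper gives no proof beyond citing Huisken's Corollaries 3.5 and 3.6, and your computation is exactly Huisken's derivation, so the approaches coincide; your observation that the identities are pointwise is also what justifies the paper's later use of them for complete, noncompact flows.
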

\noindent We frequently use Kato's inequality in our computations in sections 3 and 4: Since $A$ is a smooth tensor, it satisfies the pointwise bound $|\nabla|A||\leq |\nabla A|$ whenever $|A|\neq 0$.\\
\\
\noindent The MCF is invariant under the parabolic rescaling, hence we set
\[ \Tilde{F}(\Tilde{x},\Tilde{t}) = \lambda(F(x,\lambda^{-2}\tilde{t}+t_0) - x_0)\]
$\forall (x,\Tilde{t})\in M\times[-\lambda^2t_0,\lambda^2(T-t_0)]$ and some $x_0\in\mathbb{R}^{n+1}$, $t_0\in[0,T)$ and $\lambda>0$. 
\[ \Tilde{A}=\lambda A, \quad \quad \Tilde{g}=\lambda^2 g, \quad \quad |\Tilde{A}| = \lambda^{-1}|A|, \quad \quad \Tilde{H}= \lambda^{-1}H, \quad \quad |\Tilde{\nabla}\Tilde{H}| =\lambda^{-2}|\nabla H|\]
where $\Tilde{A},\Tilde{g},\Tilde{H}$ denote the second fundamental form, induced metric and mean curvature of the rescaled hypersurface $\Tilde{F}_{\tilde{t}}(M)$.
\begin{lemma}
    \begin{equation}
        \partial_t |A|^2 = 2(\nabla^2 H\cdot A + H\cdot\text{tr}(A^3))
    \end{equation} where $\nabla^2H\cdot A=g^{ik}g^{jl}\nabla_i\nabla_jHh_{kl}, \text{tr}(A^3)=h^j_ih^k_jh^i_k$.
\end{lemma}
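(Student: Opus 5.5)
The plan is to compute $\partial_t|A|^2$ directly from the first-order evolution equations of the induced metric and the second fundamental form, written in local coordinates on $M$, rather than going through the reaction–diffusion form in Lemma 2.1. Since $|A|^2 = g^{ik}g^{jl}h_{ij}h_{kl}$, only two ingredients are needed:
\begin{equation*}
\partial_t g_{ij} = -2H h_{ij}, \qquad \partial_t h_{ij} = \nabla_i\nabla_j H - H h_{ik}h^k_{\ j}.
\end{equation*}
Both follow from $\partial_t F = -H\nu$ and the conventions fixed above, namely $g_{ij}=\langle\partial_iF,\partial_jF\rangle$ and $h_{ij}=\langle\bar\nabla_{\partial_iF}\nu,\partial_jF\rangle = -\langle \partial_i\partial_j F,\nu\rangle$.

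For the metric, I would differentiate $g_{ij}$ in time and commute $\partial_t$ with $\partial_i$. This gives
\begin{equation*}
\partial_t g_{ij}=\langle\partial_i(-H\nu),\partial_jF\rangle+(i\leftrightarrow j)=-2Hh_{ij},
\end{equation*}
where I use $\langle\nu,\partial_jF\rangle=0$. Consequently $\partial_t g^{ij} = 2Hh^{ij}$.

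For the second fundamental form, I would first compute $\partial_t\nu$. Since $|\nu|=1$, it is tangential, and
\begin{equation*}
\langle\partial_t\nu,\partial_jF\rangle=-\langle\nu,\partial_j(-H\nu)\rangle=\partial_jH,
\end{equation*}
so $\partial_t\nu=\nabla H$. Then I would differentiate $h_{ij}=-\langle\partial_i\partial_jF,\nu\rangle$:
\begin{equation*}
\partial_t h_{ij}=-\langle\partial_i\partial_j(-H\nu),\nu\rangle-\langle\partial_i\partial_jF,\nabla H\rangle .
\end{equation*}
The next step is to expand $\partial_i\partial_j(H\nu)$ using the Weingarten relation $\partial_i\nu = h_i^{\ k}\partial_kF$ and the Gauss formula $\partial_i\partial_jF=\Gamma^k_{ij}\partial_kF - h_{ij}\nu$. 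The $\Gamma$ terms combine into the covariant Hessian $\nabla_i\nabla_jH$. The normal component of $\partial_i(h_j^{\ k}\partial_kF)$ produces $-Hh_{ik}h^k_{\ j}$. The main obstacle is this sign and index bookkeeping, which must come out consistent with the paper's choice $\vec H=-H\nu$. As a sanity check I would test the result on shrinking spheres.

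Finally, I would differentiate $|A|^2 = g^{ik}g^{jl}h_{ij}h_{kl}$. The two inverse-metric factors contribute
\begin{equation*}
2\cdot (2Hh^{ik})g^{jl}h_{ij}h_{kl}=4H\,\mathrm{tr}(A^3).
\end{equation*}
The two $h$ factors contribute
\begin{equation*}
2h^{ij}\bigl(\nabla_i\nabla_jH - Hh_{ik}h^k_{\ j}\bigr)=2\nabla^2H\cdot A-2H\,\mathrm{tr}(A^3).
\end{equation*}
Summing gives $\partial_t|A|^2=2(\nabla^2H\cdot A+H\,\mathrm{tr}(A^3))$, which is (2.4).

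As a cross-check, I would rewrite $\nabla^2 H\cdot A$ using Simons' identity $\Delta h_{ij}=\nabla_i\nabla_jH+Hh_{ik}h^k_{\ j}-|A|^2h_{ij}$. This should recover (2.3) of Lemma 2.1.
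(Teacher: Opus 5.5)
Your proposal is correct and matches the paper's proof: both apply the product rule to $|A|^2=g^{ik}g^{jl}h_{ij}h_{kl}$ with $\partial_t g^{ij}=2Hh^{ij}$ and $\partial_t h_{ij}=\nabla_i\nabla_jH-Hh_{ik}h^k_{\ j}$, so the cubic terms combine as $4H\,\mathrm{tr}(A^3)-2H\,\mathrm{tr}(A^3)$ alongside $2\nabla^2H\cdot A$. The only difference is that you derive the evolution of $h_{ij}$ yourself from the Gauss--Weingarten relations, and your signs are right under the convention $\vec H=-H\nu$; the paper instead cites this evolution equation from Huisken's Theorem 3.4.
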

\begin{proof}
    From Theorem 3.4 of \cite{huisken1984flow}, in local coordinates we have \[ \partial_th_{ij} = \nabla_i\nabla_j H - Hh_{ir}g^{rs}h_{sj} \] 
    Recalling that $\partial_tg_{ij}=-2Hh_{ij}$ we get the following
    \begin{align*}
        \partial_t|A|^2&=\partial_t(g^{ik}g^{jl}h_{ij}h_{kl})\\
        &=2g^{ik}g^{jl}(\partial_th_{ij})h_{kl} + 2(\partial_tg^{ik})g^{jl}h_{ij}h_{kl}\\
        &= 2g^{ik}g^{jl}(\nabla_i\nabla_jH -Hh_{ir}g^{rs}h_{sj})h_{kl} + 2(2Hh^{ik})g^{jl}h_{ij}h_{kl}\\
        &= 2g^{ik}g^{jl}\nabla_i\nabla_jHh_{kl} - 2Hg^{ik}g^{jl}g^{rs}h_{ir}h_{sj}h_{kl} + 4 Hg^{ai}g^{bk}g^{jl}h_{ab}h_{ij}h_{kl}
    \end{align*}
    after relabeling the indices, the contractions in the last two terms equal $h^j_ih^k_jh^i_k$, we can choose an orthonormal basis such that $g_{ij}=\delta_{ij}, h_{ij}=\kappa_ig_{ij}$ where $\kappa_i$ are the principal curvatures; then\[ \partial_t |A|^2 = 2\sum_{i=1}^n\nabla_i\nabla_iH\cdot\kappa_i + 2 H\sum_{i=1}^n\kappa_i^3 \] and (2.4) holds. We freely utilize the estimate $|\text{tr}(A^3)| \leq c_n|A|^3$ without further mention. The advantage of this evolution equation lies in its control of nonlinearities via $|A|^3$ terms, rather than the more singular $|A|^4$ term present in (2.3). This reduction in order makes it feasible to derive apriori $L^p$ bounds on $A$ in sections 3 and 4.
\end{proof}
\begin{definition}
    Let $F:M^n\times [0,T) \rightarrow \mathbb{R}^{n+1}$ be a closed, smooth immersed mean curvature flow. The total volume of $M_t=F_t(M)$ with multiplicity is the integral over $M$ of the pulled back measure $d\mu_t$ induced by $F_t$ and is given by,
    \begin{align}
        \text{vol}_{g_t}(M_t) := \int_M d\mu_t = \int_M \sqrt{g_t}dx < +\infty.
    \end{align}
    Let $h\in C_C^{\infty}(M\times [0,T))$, then 
    \begin{align}
       \lVert h \rVert_{1,M_t} := \int_{M_t} |h|d\mu_t = \int_M |h|(x,t)d\mu_t.
    \end{align}
\end{definition}
\begin{definition}
    The $L^p$ space time integral for $h\in C_C^{\infty}(M\times [0,T))$ over the evolving hypersurface $t\mapsto M_t=F_t(M),t\in[0,T)$ is then defined as 
    \begin{align*}
        \lVert h \rVert_{p,M_{(\cdot)},[0,T)} :=  \biggl(\int_0^T\int_M |h|^p(x,t) d\mu_t dt\biggr)^{1/p}
    \end{align*}
\end{definition}
\begin{definition}
    (Pre-image set)  Let $F:M^n\times [0,T) \rightarrow \mathbb{R}^{n+1}$ be a complete, smooth, properly immersed mean curvature flow, $M_t=F_t(M)$; for a fixed $x_0\in\mathbb{R}^{n+1},r>0$, the pre-image set of $M_t\cap B_r(x_0)\subset \mathbb{R}^{n+1}$ under $F$ at time $t\in(0,T)$ is defined as
    \begin{equation*}
        \Omega_t := F_t^{-1}(B_r(x_0)) = \{ x \in M : F_t(x) \in M_t\cap B_r(x_0) \} \subset M
    \end{equation*}
    which means the following integrals are well-defined
    \begin{equation*}
         \int_{M_t\cap B_r(x_0)}|A|^2(t) d\mu_t := \int_{F_t^{-1}(B_r(x_0))}|A|^2(t) d\mu_t \equiv \int_{\Omega_t} |A|^2(t) d\mu_t.
    \end{equation*}
\end{definition}
In the next proposition, we convey the idea that every point $x\in M$ that maps into $B_r(x_0)$ at time $t$ must have been mapped into a slightly larger ball around $x_0$ at the initial time.
\begin{proposition}
    Let $F:M^n\times [0,T) \rightarrow \mathbb{R}^{n+1}$ be a complete, smooth, properly immersed mean curvature flow, $M_t=F_t(M)$ and \[ \sup_{M^n\times [0,T)} |H|(x,t) \leq c_1 < +\infty \] 
    then for a fixed $x_0\in\mathbb{R}^{n+1},r>0$ at time $t\in (0,T)$, \[ \Omega_t \subseteq \hat{\Omega}_0 := F_0^{-1}(B_{r + K}(x_0)) \]
    where $K:=\int_0^T |H|(x,t)dt \leq Tc_1$
    \begin{proof}
        Since the flow evolves by mean curvature, 
        \begin{align*}
            \left|\frac{\partial F}{\partial t}(x,t)\right| &= |H|(x,t) \leq c_1\\
            \Rightarrow |F(x,0) - F(x,t)| &\leq \int_0^t |H|(x,s)ds \leq tc_1
        \end{align*}
        for every $0<t<T$. We now have $F_0(x) \in B_{tc_1}(F_t(x))$.
    Let $x\in \Omega_t$, then $F_t(x) \in B_r(x_0)$,
    \begin{equation*}
        |F(x,0) - x_0| \leq |F(x,0)- F(x,t)| + |F(x,t) - x_0| \leq r + tc_1 < r + Tc_1 = r + K.
    \end{equation*}
    Thus $F_0(x)\in B_{r+K}(x_0) \Rightarrow x \in \hat{\Omega}_0$ and the inclusion holds.
    \end{proof}
\end{proposition}
\begin{definition}
    Let $F:M^n \times [0,T) \rightarrow \mathbb{R}^{n+1}$ be a complete, smooth, properly immersed mean curvature flow, $M_t=F_t(M)$; for $x_0\in\mathbb{R}^{n+1},r>0$ and $\Omega_t$ as defined in Definition 2.5,
    \begin{align}
        vol_{g_t}(M_t\cap B_r(x_0)) = \int_{M_t\cap B_r(x_0)} d\mu_t := \int_{F_t^{-1}(B_r(x_0))} d\mu_t \equiv \int_{\Omega_t}d\mu_t = \text{vol}_{g_t}(\Omega_t).
    \end{align}
    To make computations locally on $M_t\cap B_r(x_0)$, we define an intrinsic smooth cutoff function as follows: Choose a function $\varphi\in C_C^{\infty}([0,\infty))$ such that,
    \begin{align*}
        \varphi(s)&=\begin{cases}
            1, \quad \quad \quad s\leq \frac{r^2}{4}\\
            \in[0,1], \:\:\: \frac{r^2}{4}\leq s\leq r^2\\
            0, \quad \quad \quad s\geq r^2
        \end{cases}
    \end{align*}
    and $|\varphi'|\leq c_n/r^2,|\varphi''|\leq c_n/r^4$. For $0<\delta<1$ define
    \begin{align*}
         \psi&:=\varphi^{1/\delta},\\
         \phi_t(x) &:= \psi(|F(x,t)-x_0|^2)        
    \end{align*}
    where $\phi_t:M\rightarrow[0,1], t\in[0,T)$ satisfies
    \begin{align*}
    \phi_t(x) &= \begin{cases}
        1, \quad \quad \quad \: \: |F(x,t) - x_0|^2 \leq \frac{r^2}{4}\\
        \in (0,1), \quad \frac{r^2}{4}\leq |F(x,t)-x_0|^2\leq r^2\\
        0, \quad \quad \quad  \: \: |F(x,t)-x_0|^2 \geq r^2.
    \end{cases}
    \end{align*}
Furthermore, for all $t\in[0,T)$ it is true that $\text{supp}\phi_t\subseteq \Omega_t\subset M$. 
\end{definition}
Proposition 2.6 and Definition 2.7 imply that $\Omega_t$ has only finitely many connected components contained in $\hat{\Omega}_0$. 
\begin{lemma}
For $\Omega_t$ and $\hat{\Omega}_0$ as defined Definition 2.5 and Proposition 2.6 respectively, 
    \begin{equation*}
        \text{vol}_{g_t}(\Omega_t) \leq \text{vol}_{g_0}(\hat{\Omega}_0).
    \end{equation*}
    \begin{proof}
        For every $t\in(0,T)$ Proposition 2.6 gives us $\Omega_t\subseteq \hat{\Omega}_0 \Rightarrow \text{vol}_{g_t}(\Omega_t) \leq \text{vol}_{g_t}(\hat{\Omega}_0)$ and using (2.1) we have $\text{vol}_{g_t}(\hat{\Omega}_0) \leq \text{vol}_{g_0}(\hat{\Omega}_0)$. Hence the result holds.
    \end{proof}
\end{lemma}
\begin{remark}
    For a smooth cutoff function $\phi_t\in C_C^\infty(M)$ defined such that $\text{supp}\phi_t\subseteq\Omega_t\subseteq\hat{\Omega}_0$ with $\phi_t(\cdot)\leq\mathbf{1}_{\Omega_t}(\cdot)$, we have for every $t\in[0,T)$, \begin{equation}\int_M \phi_td\mu_t \leq \text{vol}_{g_t}(\Omega_t) \leq \text{vol}_{g_0}(\hat{\Omega}_0).\end{equation}
\end{remark} 
\begin{lemma}
    For a smooth cutoff function $\phi_t\in C_C^{\infty}(M)$ as in Definition 2.7, we have the following bounds 
    \begin{align}
        |\nabla \phi_t| &\leq \frac{\tilde{C}(n,\delta)}{r}\phi_t^{1-\delta},\\
        |\partial_t \phi_t| &\leq \frac{\tilde{C}(n,\delta)}{r}|H|\phi_t^{1-\delta}.
    \end{align}
    \begin{proof}
    In the compactly supported region i.e $|F(x,t)-x_0|^2 \leq r^2$,
        \begin{align*}
            |\nabla \phi_t (x)|_{g_t} &\leq \frac{1}{\delta} \phi_t^{1-\delta} |\varphi'| |\nabla|F(x,t)-x_0|^2_{\mathbb{R}^{n+1}}|_{g_t}\\
        &\leq \frac{2c_n}{\delta r^2} \phi_t^{1-\delta} |\langle F(x,t)-x_0, \nabla F(x,t)\rangle|_{g_t}\\
        &\leq \frac{2c_n}{\delta r^2} \phi_t^{1-\delta} |F(x,t)-x_0|_{\mathbb{R}^{n+1}}|\nabla F(x,t)|_{g_t}.
        \end{align*}
    Recall that $\nabla F^k\in T_xM$ for each $1\leq k\leq n+1$. Choosing normal coordinates at the point $x\in M$ for any $t\in[0,T)$ we calculate,
    \begin{align*}
    |\nabla F(x,t)|^2_{g_t} &= g^{ij}\sum_{k=1}^{n+1} \partial_iF^k(x,t)\partial_jF^k(x,t)\\
    &= \sum_{i=1}^n\sum_{k=1}^{n+1} \partial_iF^k(x,t)\partial_iF^k(x,t)\\
    &= \sum_{i=1}^n g_{ii}(x,t)\\
    &= \sum_{i=1}^n \delta_{ii} = n.
    \end{align*}
   It follows that,
    \begin{equation*}
        \Rightarrow |\nabla \phi_t(x)|_{g_t} \leq \frac{\tilde{C}(n,\delta)}{r}\phi_t^{1-\delta}
    \end{equation*}
    Similarly,
    \begin{align*}
        |\partial_t\phi_t(x)|_{g_t} &\leq \frac{1}{\delta}\phi_t^{1-\delta}|\varphi'||\partial_t|F(x,t)-x_0|^2_{\mathbb{R}^{n+1}}|_{g_t}\\
        &\leq \frac{2c_n}{\delta r^2}\phi_t^{1-\delta}|\langle F(x,t) - x_0, \partial_t F(x,t)\rangle|_{g_t}\\
        &\leq \frac{2c_n}{\delta r}|H|\phi_t^{1-\delta} = \frac{\tilde{C}(n,\delta)}{r}|H|\phi_t^{1-\delta}
    \end{align*}
    in particular if $|\partial_t F(x,t)| = |H(x,t)| \leq c_1 \leq \frac{1}{\sqrt{2}}$ 
    \begin{align}
        |\partial_t\phi_t(x)|_{g_t} \leq \frac{\tilde{C}(n,\delta)}{r}\phi_t^{1-\delta}
    \end{align}
    \end{proof}
\end{lemma}
\begin{lemma}
    Let $F:M^n \times [0,T)\rightarrow \mathbb{R}^{n+1}$ be a complete, smoooth, properly immersed mean curvature flow. For $x_0\in\mathbb{R}^{n+1},r>0$ we have from \cite{Brakke1978}, 
    \begin{equation}
        (\partial_t - \Delta)|F(x,t)-x_0|^2 = -2n.
    \end{equation}
    Let $0\leq t_0\leq t_1<T$ with $t_1-t_0=r^2$, choose a time cutoff function $\gamma\in C^{\infty}([0,T))$ such that \begin{align*}
        \gamma (t) = \begin{cases}
            1, \quad \quad \quad t_1 \leq t < T\\
            \in[0,1], \: \: \: t_0 \leq t \leq t_1\\
            0, \quad \quad \quad t\leq t_0
        \end{cases}
    \end{align*} and $|\gamma'|\leq c_n/r^2$. Now define a space-time cutoff function like \cite{Ecker1995} $\eta(x,t): M \times [0,T) \rightarrow \mathbb{R}$ with  $\eta(x,t):= \psi(|F(x,t) - x_0|^2)\cdot \gamma(t)$ where $\eta(x,0)=0\forall x\in M, \psi$ is as in Definition 2.7. Furthermore as stated in \cite{Ecker1995},
    \begin{equation}
        |(\partial_t - \Delta)\eta| \leq \frac{\tilde{C}(n,\delta)}{r^2}.
    \end{equation}
\end{lemma}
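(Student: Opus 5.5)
We prove the two claims of the lemma in turn: first the identity $(\partial_t-\Delta)|F-x_0|^2=-2n$, then the bound on $(\partial_t-\Delta)\eta$ by the chain rule.

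\textbf{Step 1: the identity for $u:=|F(x,t)-x_0|^2$.} For the time derivative, $\partial_t F=\vec H$ gives
\[
\partial_t u=2\langle F-x_0,\vec H\rangle .
\]
For the spatial part, work in normal coordinates at $x$ at time $t$, so that $\nabla_i\nabla_j F=-h_{ij}\nu$ up to sign convention. Then $\Delta F=g^{ij}\nabla_i\nabla_jF=\vec H$, since $\Delta F$ is the mean curvature vector. Hence
\[
\Delta u=2|\nabla F|^2+2\langle F-x_0,\Delta F\rangle=2n+2\langle F-x_0,\vec H\rangle,
\]
where $|\nabla F|^2=n$ is the computation already carried out in Lemma 2.9. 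Subtracting the two expressions, the $\vec H$ terms cancel exactly and $(\partial_t-\Delta)u=-2n$.

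\textbf{Step 2: the bound on $\eta=\psi(u)\gamma(t)$.} By the chain rule,
\[
(\partial_t-\Delta)\eta=\gamma\bigl(\psi'(u)(\partial_t-\Delta)u-\psi''(u)|\nabla u|^2\bigr)+\psi(u)\gamma'(t).
\]
We bound the three contributions separately.
\begin{itemize}
\item The first term is $-2n\gamma\psi'$. Since $|\psi'|\le \frac1\delta|\varphi'|\le c/r^2$, it is bounded by $C/r^2$.
\item For the second term, on the support we have $u\le r^2$, so $|\nabla u|^2=4|\langle F-x_0,\nabla F\rangle|^2\le 4nr^2$. Also $\psi''=\frac1\delta\varphi^{1/\delta-1}\varphi''+\frac1\delta(\frac1\delta-1)\varphi^{1/\delta-2}(\varphi')^2$.
\item The last term satisfies $|\psi\gamma'|\le c_n/r^2$ by the choice of $\gamma$.
\end{itemize}

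\textbf{Main obstacle.} The delicate point is the second term in $\psi''$, which carries the possibly negative power $\varphi^{1/\delta-2}$ when $\delta>1/2$. To handle it I would first try restricting to $\delta\le 1/2$. If larger $\delta$ is needed, I would use the standard fact (as in Ecker--Huisken) that $\varphi$ can be chosen so that $(\varphi')^2\le C\varphi/r^4$, e.g. $\varphi=\chi^2$ for a smooth step $\chi$, so that $\varphi^{1/\delta-2}(\varphi')^2\le C\varphi^{1/\delta-1}/r^4\le C/r^4$.

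With this in hand, $|\psi''|\le C(n,\delta)/r^4$, so the second term is at most $C(n,\delta)r^2/r^4=C/r^2$. Summing the three contributions yields $|(\partial_t-\Delta)\eta|\le \tilde C(n,\delta)/r^2$. Finally, $\eta(\cdot,0)=0$ because $\gamma=0$ for $t\le t_0$ and $t_0\ge0$.
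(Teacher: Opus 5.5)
Your proof is correct and is essentially the paper's argument. It uses the same chain-rule expansion of $(\partial_t-\Delta)\bigl(\psi(u)\gamma\bigr)$, with term-by-term bounds from $|\varphi'|\le c_n/r^2$, $|\varphi''|\le c_n/r^4$, $|\nabla u|^2\le 4nr^2$ on the support and $|\gamma'|\le c_n/r^2$; the only addition is that you derive $(\partial_t-\Delta)|F-x_0|^2=-2n$ rather than citing Brakke.

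Your handling of the $\varphi^{1/\delta-2}(\varphi')^2$ term is more careful than the paper's, which silently bounds $\varphi^{1/\delta-2}$ by $1$; that is legitimate only for $\delta\le\frac12$. No special choice of $\varphi$ is actually needed. Extend $\varphi$ by $1$ to $s<0$; minimizing $0\le\varphi(s+h)\le\varphi(s)+h\varphi'(s)+\frac{c_n}{2r^4}h^2$ over $h$ then gives $(\varphi')^2\le 2c_n\varphi/r^4$ for any admissible $\varphi$.
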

\begin{proof}
    We write down evolution equation of $\eta$ according to Lemma 3.14 from \cite{Ecker2004} and then use (2.12):
\begin{align*}
    \frac{\partial \eta}{\partial t} &= \gamma'\varphi^{\frac{1}{\delta}} + \frac{1}{\delta}\gamma\varphi^{\frac{1}{\delta}-1}\varphi'\frac{\partial}{\partial t}|F(x,t)-x_0|^2\\
    \nabla \eta &= \frac{1}{\delta}\gamma\varphi^{\frac{1}{\delta}-1}\varphi'\nabla|F(x,t)-x_0|^2\\
    \Delta \eta &=\frac{1}{\delta}\gamma\varphi^{\frac{1}{\delta}-1}\varphi'\Delta|F(x,t)-x_0|^2 +\frac{1}{\delta}\gamma\varphi^{\frac{1}{\delta}-1}\varphi''|\nabla|F(x,t)-x_0|^2|^2 \\
    &+\frac{(1-\delta)}{\delta^2}\gamma\varphi^{\frac{1}{\delta}-2}(\varphi')^2|\nabla|F(x,t)-x_0|^2|^2\\
    |(\partial_t - \Delta)\eta|&\leq |\gamma'\varphi^{\frac{1}{\delta}}| + \frac{1}{\delta}|\gamma\varphi^{\frac{1}{\delta}-1}|\varphi'||(\partial_t-\Delta)|F(x,t)-x_0|^2| + \frac{1}{\delta}|\gamma\varphi^{\frac{1}{\delta}-1}|\varphi''||\nabla|F(x,t)-x_0|^2|^2\\
    &+\frac{|\delta-1|}{\delta^2}|\gamma\varphi^{\frac{1}{\delta}-2}|(\varphi')^2||\nabla|F(x,t)-x_0|^2|^2\\
&\leq \frac{c_n}{r^2} + \frac{2nc_n}{\delta r^2} + \frac{|(1-\delta)|c_n}{\delta^2 r^2} + \frac{c_n}{\delta r^2}
\end{align*}
and thus we obtain (2.13).
\end{proof}
We recall the Michael-Simon-Sobolev inequality which is crucial for Proposition 5.1.
\begin{theorem}
    (Theorem 2.1, Michael and Simon,\cite{Michael1973-tj}) Let $F:M^n \rightarrow\mathbb{R}^{n+1}$ be a complete, smooth proper immersion. For all $f\geq 0, f \in C_C^1(M)$, there exists a uniform constant $c_n$ such that 
    \begin{align}
        \left(\int_M f^{\frac{n}{n-1}} d\mu\right)^{\frac{n-1}{n}} \leq c_n \left(\int_M |\nabla f| + |H| f d\mu\right)
    \end{align}
    where $d\mu$ is the pulled back measure of $F(M)$ which uses the induced metric $g=F^*(\delta_{\text{Eucl}})$.
\end{theorem}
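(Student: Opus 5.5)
The plan is to follow the classical Michael--Simon argument, which uses only the first variation formula and a covering lemma. The first variation formula for the immersion $F$ says that for every compactly supported $C^1$ vector field $X$ on $\mathbb{R}^{n+1}$,
\[
\int_M \operatorname{div}_M X \, d\mu = -\int_M \langle \Vec{H}, X\rangle \, d\mu .
\]
Here $\operatorname{div}_M$ is the tangential divergence along $F$. Properness guarantees that $f\,X\circ F$ is compactly supported on $M$ for $f\in C^1_C(M)$. Apply this with the field $f(x)\,X(F(x))$ and $X(y)=\lambda(|y-\xi|)(y-\xi)$, for a fixed point $\xi=F(x_1)$ and a monotone cutoff $\lambda$ approximating $\mathbf{1}_{[0,\rho)}$. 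Using $\operatorname{div}_M(y-\xi)=n$ gives a weighted monotonicity identity. Letting $\lambda\to\mathbf{1}_{[0,\rho)}$ and writing $I(\rho)=\int_{F^{-1}(B_\rho(\xi))} f\, d\mu$, the identity yields the differential inequality
\[
-\frac{d}{d\rho}\bigl(\rho^{-n} I(\rho)\bigr) \le \rho^{-n}\int_{F^{-1}(B_\rho(\xi))} \bigl(|\nabla f| + |H| f\bigr)\, d\mu ,
\]
understood in the distributional sense in $\rho$.

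Next integrate this inequality from $\rho$ to $\rho_0$ and let $\rho\to 0$. Since $F$ is an immersion, $\rho^{-n}I(\rho)\to \omega_n f(x_1)$ (with multiplicity at least one). Write $J(\rho)=\int_{F^{-1}(B_\rho(\xi))}(|\nabla f|+|H|f)\,d\mu$. This gives, at every point with $f(x_1)\ge 1$,
\[
\omega_n \le \rho_0^{-n} I(\rho_0) + \int_0^{\rho_0} \rho^{-n} J(\rho)\, d\rho .
\]
Choose $\rho_0 = \bigl(2\omega_n^{-1}\int_M f\,d\mu\bigr)^{1/n}$, so that the first term is at most $\omega_n/2$. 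A standard calculus lemma (as in Michael--Simon) then produces a radius $0<\rho<\rho_0$ with
\[
\int_{F^{-1}(B_{5\rho}(\xi))} f\, d\mu \le C_n\, \rho_0\, J(\rho).
\]
This step needs care. The lemma compares the integral of $\rho^{-n}J$ against the doubling behaviour of $I$ and argues by contradiction if no such $\rho$ exists. I expect it to be the main technical obstacle, because the constants must be kept independent of everything except $n$.

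With these radii in hand, apply the Vitali $5r$-covering lemma in $\mathbb{R}^{n+1}$ to the balls $B_\rho(\xi)$ centred at the images of $\{f\ge 1\}$. The radii are bounded by $\rho_0$, so the lemma applies. It yields disjoint balls whose fivefold enlargements cover the set, and summing the previous estimate over these balls gives a weak-type bound
\[
\int_{\{f\ge 1\}} f \, d\mu \le C_n \Bigl(\int_M f\, d\mu\Bigr)^{1/n} \int_M \bigl(|\nabla f| + |H| f\bigr)\, d\mu .
\]
Finally, upgrade the weak estimate to the strong $L^{n/(n-1)}$ inequality. Apply the weak estimate, after rescaling to the level $t$, to truncations $f_t=\min\{(f-t)_+,\,t\}$. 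Then sum dyadically over $t=2^k$ (layer-cake), noting that $\nabla f_t$ is supported on $\{t<f<2t\}$ so the gradient terms do not overlap. Absorbing the resulting factor $\|f\|_{n/(n-1)}^{1/n}$ gives the stated inequality with a constant $c_n$ depending only on $n$. Smoothness of $f$ can be relaxed to $C^1_C$ by approximation, and no curvature bound is used beyond the $|H|f$ term already present on the right-hand side.
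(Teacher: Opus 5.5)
The paper gives no argument of its own here; it simply quotes Theorem 2.1 of \cite{Michael1973-tj}. Your outline is a correct reconstruction of the classical proof of that theorem: first variation with $f\,\lambda(|F-\xi|)(F-\xi)$, the inequality $-\frac{d}{d\rho}(\rho^{-n}I)\le\rho^{-n}J$, the real-variable lemma producing $\rho<\rho_0$ with $I(5\rho)\le C_n\rho_0J(\rho)$, and the $5r$-covering. The contradiction in the lemma closes because $\sup_{0<\sigma<\rho_0}\sigma^{-n}I(\sigma)<\infty$ for an immersion, and because $\int_{\rho_0}^{5\rho_0}s^{-n}I(s)\,ds\le \omega_n\rho_0/(2(n-1))$ by your choice of $\rho_0$.

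You differ from the usual finish only in the upgrade step. The textbook route applies the weak bound to smoothed indicators $\varphi(f-t)$, obtains an isoperimetric-type inequality for each $\{f>t\}$, and integrates in $t$. You instead truncate dyadically and absorb, and this works. Set $f_k=\min\{(f-2^k)_+,2^k\}$ and $a_k=2^{kn/(n-1)}\mu(\{f>2^k\})$. The weak bound applied to $f_k/2^k$ gives $a_{k+1}\le C_n a_k^{1/n}\int_M(|\nabla f_k|+|H|f_k)\,d\mu$. Summing over $k\in\mathbb{Z}$ yields $S\le C_nS^{1/n}\int_M(|\nabla f|+|H|f)\,d\mu$, where $S=\sum_k a_k\simeq\int_M f^{n/(n-1)}d\mu\in(0,\infty)$.

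Your version avoids the limiting, coarea-type step in $t$, at the harmless cost of needing $S<\infty$ a priori. The level-set version instead yields the stronger intermediate statement, an isoperimetric inequality for every superlevel set.

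Three small corrections:
\begin{enumerate}
\item The $|H|f_k$ terms do overlap across levels. They recombine only because $\sum_{k\in\mathbb{Z}}f_k=f$ pointwise, which you should state alongside the gradient terms.
\item The absorbed factor is $S^{1/n}\simeq\lVert f\rVert_{n/(n-1)}^{1/(n-1)}$, not $\lVert f\rVert_{n/(n-1)}^{1/n}$.
\item Compact support of $f\,X\circ F$ comes from compactness of $\operatorname{supp}f$, not from properness. In fact the inequality needs neither properness nor completeness, only that $F$ is an immersion.
\end{enumerate}
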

\section{Compact solutions}
\begin{theorem}
    Let $F:M^n \times [0,T)\rightarrow\mathbb{R}^{n+1}$ be a closed, smooth immersed mean curvature flow. Assume the following uniform bounds \[ \sup_{M^n \times [0,T)} |H|(x,t) \leq \frac{1}{\sqrt{2}} , \quad \quad \quad \sup_{M^n \times [0,T)} |\nabla H|(x,t) \leq 1.\] Let $p\in[4,\infty)$, if $\lVert A(0) \rVert_{p,M_0} =: A_0<\infty$, then there exists constants $k=k(n,p)$ and $C_1=C_1(p)$ such that for every $t\in(0,T)$
    \[ \int_M |A|^p(t)d\mu_t \leq 2e^{kt}\biggl(\int_M |A|^p(0)d\mu_0 + C_1 V\biggr)\]
    where $V:= \text{vol}_{g_0}(M_0) = \int_M d\mu_0 < \infty$.
\end{theorem}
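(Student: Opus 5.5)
We differentiate $\int_M |A|^p\,d\mu_t$ in time, using the first-order evolution (2.4) for $|A|^2$ and the measure evolution (2.1), and then integrate by parts. Writing $|A|^p=(|A|^2)^{p/2}$, we get
\begin{align*}
\frac{d}{dt}\int_M |A|^p d\mu_t &= \frac{p}{2}\int_M |A|^{p-2}\,\partial_t|A|^2\,d\mu_t - \int_M H^2|A|^p d\mu_t\\
&\leq p\int_M |A|^{p-2}\,\nabla^2H\cdot A\,d\mu_t + p\,c_n\int_M |H||A|^{p+1}d\mu_t .
\end{align*}
The second term is harmless only if we can avoid the extra power, so the real work is in the Hessian term.

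For the Hessian term we integrate by parts once, which is legitimate since $M$ is closed:
\[
\int_M |A|^{p-2}\nabla_i\nabla_jH\,h^{ij} = -\int_M \nabla_jH\,\nabla_i\bigl(|A|^{p-2}h^{ij}\bigr).
\]
By the Codazzi equation, $\nabla_i h^{ij}=\nabla^j H$, so the derivative falling on $h^{ij}$ produces $-\int|A|^{p-2}|\nabla H|^2\le 0$, which we discard. The derivative falling on $|A|^{p-2}$ gives a term bounded by $(p-2)\int |\nabla H|\,|A|^{p-2}|\nabla|A||$, which is controlled via Kato's inequality. Since $|\nabla H|\le 1$, this leaves $\int |A|^{p-2}|\nabla A|$, which is not closed. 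The remedy is to keep the good gradient term from the classical evolution (2.3). We therefore plan to compute $\frac{d}{dt}\int|A|^p$ as a convex combination of (2.3) and (2.4), say $\theta$ times (2.3) and $(1-\theta)$ times (2.4). The (2.3) part contributes
\[
-\theta\, c(p)\int|A|^{p-2}|\nabla A|^2 + 2\theta\int |A|^{p+2}.
\]

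The main obstacle is the $|A|^{p+2}$ term from (2.3), which would destroy the estimate. So instead we will use only (2.4) and absorb the gradient differently. A second integration by parts moves $\nabla$ back onto $H$:
\[
\int |A|^{p-3}\,\nabla_j H\,h^{ij}\,\nabla_i|A| \;\sim\; -\int H\,\nabla_i\bigl(|A|^{p-3}\nabla_jH\,h^{ij}\bigr).
\]
Since $|H|$ and $|\nabla H|$ are both bounded, this shifts the derivative onto $\nabla^2 H$, which is circular. We therefore expect the intended route to be Young's inequality on $|\nabla H||A|^{p-2}|\nabla A|$ against a good term, and the only available good term is $-\int|A|^{p-2}|\nabla H|^2$, which is too weak.

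Given this, the practical plan is to estimate the remaining terms by exploiting that the $c_1=1/\sqrt2$ bound makes $-H^2|A|^p$ and $|H||A|^{p+1}$ comparable after Young's inequality. We bound
\[
|H||A|^{p+1}\le \varepsilon|A|^{p+2}H^2+C_\varepsilon|A|^p,
\]
and combine this with a small multiple of (2.3) so that its $-|\nabla A|^2$ term absorbs the Hessian contribution. The threshold $p\ge4$ should be exactly what makes the Kato constants work. Everything should reduce to
\[
\frac{d}{dt}\int|A|^p \le k\int|A|^p + k\,C_1 V,
\]
where the constant term comes from Young's inequality applied to low powers, together with $\int d\mu_t\le V$ from (2.1). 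Gronwall's inequality then gives the stated bound with factor $2e^{kt}$.

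I expect the hardest step to be closing the gradient terms without generating $|A|^{p+2}$, that is, finding the right combination of (2.3) and (2.4).
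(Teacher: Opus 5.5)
Your proposal has a genuine gap: it does not close, and the two fallbacks you settle on cannot work. First, the term $\varepsilon\int_M|A|^{p+2}H^2\,d\mu_t$ that Young's inequality produces from $|H||A|^{p+1}$ cannot be absorbed by $-\int_M H^2|A|^p\,d\mu_t$ from (2.1). The latter is two powers of $|A|$ weaker, and $|H|\le 1/\sqrt2$ does not make them comparable. Second, adding any multiple $\theta$ of (2.3) into $\partial_t|A|^p$ brings in $+\theta p\int_M|A|^{p+2}\,d\mu_t$ with no $H^2$ factor, as you noticed yourself. So you never reach the target inequality $\frac{d}{dt}\int_M|A|^p\le k\int_M|A|^p+kC_1V$. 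In fact no such inequality for $\int_M|A|^p$ alone is available, because the gradient term $\int_M|A|^{p-4}|\nabla A|^2\,d\mu_t$ cannot be controlled at a fixed time by $\int_M|A|^p$.

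The missing idea is to convert each bad term into the time derivative of an auxiliary integral, and to run Gronwall on the modified functional $f(t)=\int_M\bigl(|A|^p+C_1|A|^{p-2}-|A|^pH^2\bigr)\,d\mu_t$.

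For the Hessian term, after your integration by parts, split $|A|^{p-2}|\nabla A||\nabla H|=\bigl(|A|^{\frac{p-4}{2}}|\nabla A|\bigr)\bigl(|A|^{\frac p2}|\nabla H|\bigr)$ in Young's inequality, so that only $\int_M|A|^{p-4}|\nabla A|^2$ and $\int_M|A|^p$ appear. Then apply (2.3) with the \emph{lower} weight $|A|^{p-4}$, i.e.\ to $\frac{d}{dt}\int_M|A|^{p-2}\,d\mu_t$. The reaction term becomes $|A|^{p-4}|A|^4=|A|^p$, which is harmless. The Laplacian term integrates by parts to $-(p-2)(p-4)\int_M|A|^{p-4}|\nabla|A||^2\le0$, which is exactly where $p\ge4$ is used. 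This gives
\[
\int_M|A|^{p-4}|\nabla A|^2\,d\mu_t\le-\frac{1}{p-2}\frac{d}{dt}\int_M|A|^{p-2}\,d\mu_t+\int_M|A|^p\,d\mu_t .
\]

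For the cubic term, use the evolution (2.2) of $H^2$. Multiply $2|A|^2H^2=(\partial_t-\Delta)H^2+2|\nabla H|^2$ by $|A|^p$ and integrate. This turns $\int_M|A|^{p+2}H^2$ into $\frac{d}{dt}\int_M|A|^pH^2$ plus three kinds of terms:
\begin{enumerate}
\item terms bounded by constants times $\int_M|A|^p$, using $|H|\le\sqrt n|A|$ where needed;
\item small multiples of $\int_M|A|^{p+2}H^2$, which are absorbed on the left;
\item multiples of $\int_M|A|^{p-4}|\nabla A|^2$, which are treated as above.
\end{enumerate}

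Altogether $f'\le C_2\int_M|A|^p$. The constant $1/\sqrt2$ matters because $H^2\le\frac12$ gives $f\ge\frac12\int_M|A|^p$, hence $f'\le kf$ and $\int_M|A|^p(t)\le 2e^{kt}f(0)$. The $V$ term appears only at $t=0$, from Young's inequality on $C_1\int_M|A|^{p-2}(0)\,d\mu_0$.
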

\begin{proof}
Throughout we refer to a time-dependent $|A|(\cdot,t), |H|(\cdot, t)$ but for ease of writing we choose to not explicitly write the variable. From (2.4) and the uniform bound $|\nabla H|\leq 1$ we have 
\begin{align}
    \partial_t\int_M |A|^p d\mu_t &= \int_M \partial_t |A|^p d\mu_t + \int_M|A|^p \partial_td\mu_t\notag =p\int_M|A|^{p-2}(\nabla^2H\cdot A + H\cdot\text{tr}(A^3))d\mu_t\notag \\
    &\leq p^2\int_M|A|^{p-2}|\nabla A||\nabla H|d\mu_t + pc_n\int_M|A|^{p+1}|H| d\mu_t  \notag \\
    &\leq\frac{p^4}{4}\int_M|A|^{p-4}|\nabla A|^2d\mu_t + \epsilon\int_M|A|^{p+2}H^2 d\mu_t + \left(1+\frac{p^2c^2_n}{4\epsilon}\right)\int_M|A|^p d\mu_t
\end{align}
using (2.2) and the uniform bounds $H^4 \leq \frac{1}{4},|\nabla H|\leq 1$,
\begin{align*}
    2\int_M|A|^{p+2}H^2 d\mu_t&= \int_M|A|^{p}(2|A|^2H^2)d\mu_t = \int_M|A|^{p}((\partial_t - \Delta)H^2 + 2|\nabla H|^2)) d\mu_t\\
    &= \partial_t\int_M|A|^{p}H^2 d\mu_t - \int_M\partial_t(|A|^p)H ^2 d\mu_t+ \int_M|A|^pH^4 d\mu_t - \int_M|A|^p\Delta H^2 d\mu_t\\
    &\quad \quad \quad \quad \quad \quad \quad \quad \quad \quad \quad \quad \quad \quad \quad \quad \quad \: \: + 2\int_M|A|^p|\nabla H|^2 d\mu_t\\
    &\leq \partial_t\int_M|A|^pH^2 d\mu_t - p\int_M|A|^{p-2}(\nabla^2H\cdot A + H\cdot\text{tr}(A^3))H^2 d\mu_t -\int_M|A|^p\Delta H^2 d\mu_t\\
    &\quad \quad \quad \quad \quad \quad \quad \quad \quad \quad \quad \quad \quad \quad \quad \quad \quad \: \: + \frac{9}{4}\int_M|A|^p d\mu_t \\
    &\leq \partial_t\int_M|A|^pH^2 d\mu_t + p^2\int_M|A|^{p-2}|\nabla A||\nabla H|H^2 d\mu_t + 2p\int_M|A|^{p-1}|\nabla H|^2|H| d\mu_t \\
&+ pc_n\int_M|A|^{p+1}|H|^3 d\mu_t + 2p\int_M|A|^{p-1}|\nabla A||\nabla H||H| d\mu_t + \frac{9}{4}\int_M|A|^p d\mu_t\\
&\leq \partial_t\int_M|A|^pH^2 d\mu_t + \frac{p^2}{2}\int_M|A|^{p-2}|\nabla A| d\mu_t + \left(2p\sqrt{n}+ \frac{9}{4}\right)\int_M|A|^p d\mu_t \\ & + pc_n\int_M|A|^{p+1}|H|^3 d\mu_t  + 2p\int_M|A|^{p-1}|\nabla A||\nabla H||H| d\mu_t \\
&\leq \partial_t\int_M|A|^pH^2 d\mu_t + \frac{p^4}{4}\int_M|A|^{p-4}|\nabla A|^2 d\mu_t + \left(2pn + \frac{5}{2} + \frac{p^2c^2_n}{4}\right)\int_M|A|^p d\mu_t \\
&+ \frac{1}{4}\int_M|A|^{p+2} H^2 d\mu_t + 2p\int_M|A|^{p-1}|\nabla A||\nabla H||H| d\mu_t 
\end{align*}
which means,
\begin{align*}
\frac{7}{4}\int_M|A|^{p+2}H^2 d\mu_t &\leq \partial_t\int_M |A|^pH^2d\mu_t + \frac{p^4}{4}\int_M|A|^{p-4}|\nabla A|^2 d\mu_t + 2p\int_M|A|^{p-1}|\nabla A||\nabla H||H| d\mu_t\\
&+ \left(2pn + \frac{p^2c^2_n}{4} + \frac{5}{2}\right)\int_M|A|^p d\mu_t
\end{align*}
for $|A|\neq 0$\: using Young's inequality,
\[ 2p|A|^{p-1}\cdot|\nabla A|\cdot|\nabla H||H|\leq 4p^2|A|^{p-4}|\nabla A|^2 + \frac{1}{4}|A|^{p+2}H^2|\nabla H|^2\]
and since $|\nabla H|\leq 1$,
\begin{align}
    \frac{3}{2}\int_M|A|^{p+2}H^2 d\mu_t \leq \partial_t\int_M|A|^pH^2 d\mu_t + \left(\frac{p^4}{4} + 4p^2\right)\int_M|A|^{p-4}|\nabla A|^2 d\mu_t + \left(2pn + p^2c^2_n + \frac{5}{2}\right)\int_M|A|^p d\mu_t
\end{align}
using (3.1) with $\epsilon=\frac{3}{2}$ and (3.2) we see
\begin{align}
    \partial_t\int_M|A|^p d\mu_t
    &\leq 2p^4\int_M|A|^{p-4}|\nabla A|^2 d\mu_t + \partial_t\int_M|A|^pH^2 d\mu_t  + (2pn + 4 + 2p^2c^2_n)\int_M|A|^p d\mu_t
\end{align}
to estimate the integral containing the $|\nabla A|^2$ term use (2.3),
\begin{align}
    2\int_M|A|^{p-4}|\nabla A|^2 d\mu_t &= \int_M|A|^{p-4}\Delta |A|^2 d\mu_t - \int_M|A|^{p-4}\partial_t(|A|^2) d\mu_t + 2\int_M|A|^p d\mu_t \notag \\
&= -2(p-4)\int_M|A|^{p-4}|\nabla A|^2 d\mu_t - \frac{2}{(p-2)}\partial_t\int_M|A|^{p-2} d\mu_t + 2\int_M|A|^p d\mu_t \notag \\
&  \quad \quad \quad \quad \quad \quad \quad \quad \quad \: \: -\frac{2}{(p-2)} \int_M |A|^{p-2}H^2 d\mu_t \notag \\
\Rightarrow \int_M|A|^{p-4}|\nabla A|^2 d\mu_t  &\leq - \frac{1}{(p-2)(p-3)}\partial_t\int_M |A|^{p-2} d\mu_t + \frac{1}{(p-3)}\int_M|A|^p d\mu_t 
\end{align}
using inequality (3.4) in (3.3) we obtain
\begin{align*}
    \partial_t\int_M|A|^p d\mu_t &\leq  -\frac{2p^4}{(p-2)(p-3)}\partial_t\int_M|A|^{p-2} d\mu_t + \partial_t\int_M|A|^pH^2 d\mu_t \\& +\left(2pn + 4 + 2p^2c^2_n + \frac{2p^4}{(p-3)}\right)\int_M|A|^p d\mu_t \\
    & \leq  -C_1\partial_t\int_M|A|^{p-2} d\mu_t + \partial_t\int_M|A|^{p}H^2 d\mu_t + C_2\int_M |A|^p d\mu_t 
\end{align*}
where $C_1=C_1(p),C_2=C_2(n,p)$ are constants.
\begin{align*}
    \Rightarrow\partial_t \int_M \left(|A|^p + C_1|A|^{p-2} - |A|^{p}H^2 \right) d\mu_t \leq C_2\int_M |A|^p d\mu_t
\end{align*}
let $k=C_1 \cdot C_2$, using $H^2\leq\frac{1}{2}$, we calculate 
\begin{align}
\partial_t  \int_M (|A|^p + C_1|A|^{p-2} - |A|^pH^2) d\mu_t &\leq \frac{k}{2}\int_M |A|^p d\mu_t + kC_1\int_M|A|^{p-2} d\mu_t \notag \\
&  + k\int_M|A|^pH^2 d\mu_t - k\int_M|A|^pH^2 d\mu_t\notag \\
&\leq \frac{k}{2}\int_M |A|^p d\mu_t + kC_1\int |A|^{p-2} d\mu_t \notag \\
& + \frac{k}{2}\int_M|A|^p d\mu_t - k\int|A|^pH^2 d\mu_t \notag \\
&\leq k\int_M (|A|^p + C_1|A|^{p-2} - |A|^pH^2) d\mu_t.
\end{align}
Let $f:=\int_M(|A|^p + C_1|A|^{p-2} - |A|^{p}H^2) d\mu_t$, then
\begin{align*}
    \partial_t f &\leq kf\\
    \Rightarrow f(t) &\leq e^{\int_0^tk}f(0)\\
    \Rightarrow \int_M (|A|^p(t) +C_1|A|^{p-2}(t) - |A|^{p}(t)H^2(t))d\mu_t &\leq e^{\int_{0}^tk}\biggl(\int_M (|A|^p(0) + C_1|A|^{p-2}(0) -|A|^{p}(0)H^2(0)) d\mu_0\biggr)
\end{align*}
in particular, for any $t\in(0,T)$,
\begin{align*}
    \int_M|A|^p(t) d\mu_t\leq e&^{kt}\biggl(\int_M (|A|^p(0) + C_1|A|^{p-2}(0)) d\mu_0 \biggr)
\end{align*}
via Young's inequality:
\[ C_1\int_M|A|^{p-2}(0)\cdot 1d\mu_0 \leq \left(\int_M|A|^p(0)d\mu_0 + \frac{2}{p}(C_1)^{p/2}\text{vol}_{g_0}(M_0)\right)\] this yields
\begin{align}
    \int_M |A|^p(t)d\mu_t &\leq 2e^{kt}\biggl(\int_M |A|^p(0)d\mu_0 + C_1V \biggr)
\end{align}
where $k=k(n,p),C_1=C_1(p)$ and $V:=\text{vol}_{g_0}(M_0)<\infty$.
\end{proof}
\section{Noncompact solutions and Local Results}
\begin{theorem}
 Let $F:M^n \times [0,T)\rightarrow\mathbb{R}^{n+1}$ be a complete, smooth properly immersed mean curvature flow. Assume the following uniform bounds \[ \sup_{M^n \times [0,T)} |H|(x,t) \leq \frac{1}{\sqrt{2}}, \quad \quad \quad \sup_{M^n \times [0,T)} |\nabla H|(x,t) \leq 1.\] Let $p\in[4,\infty), x_0\in\mathbb{R}^{n+1}, 0<r\leq 1, K:= T\cdot \sup_{M^n\times[0,T)}|H|, \hat{\Omega}_0:= F_0^{-1}(B_{r+K}(x_0))$. Define \[ A_0:=\lVert A(0) \rVert_{p,\hat{\Omega}_0}, \quad \quad V_0:=\text{vol}_{g_0}(\hat{\Omega}_0)=\int_{F_0^{-1}(B_{r+K}(x_0))}d\mu_0.\]  If $A_0<\infty,V_0<\infty$ then there exists constants $\tilde{k}=\tilde{k}(n,p)$ and $\tilde{C}_1=\tilde{C}_1(p,r)$ such that for every $t\in(0,T)$
    \[\int_{M_t\cap B_{r/2}(x_0)} |A|^p(t) d\mu_t \equiv \int_{F_t^{-1}(B_{r/2}(x_0))}|A|^p(t) d\mu_t \leq 2e^{\tilde{k}t}\biggl(\int_{F_0^{-1}(B_{r+K}(x_0))} |A|^p(0) d\mu_0+ \tilde{C}_1V_0\biggr).\]
\end{theorem}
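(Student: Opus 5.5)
We localize the proof of Theorem 3.1 by running the same argument with the spatial cutoff $\phi_t$ of Definition 2.7 inserted, taking $\phi_t$ to the power $p$ or so. The quantity we track is
\[ f(t):=\int_M \phi_t^{q}\bigl(|A|^p + C_1|A|^{p-2} - |A|^pH^2\bigr)\,d\mu_t \]
for a suitable power $q$ (e.g. $q=2$, with $\delta$ chosen small).

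\textbf{Step 1: differentiate $f$.} Since $\operatorname{supp}\phi_t\subseteq\Omega_t$ is compact by properness, all integrations by parts of Section 3 are legitimate. We differentiate using (2.1), (2.2), (2.3) and (2.4). Each computation from the proof of Theorem 3.1 reappears with $\phi_t^q$ inside. There are two new kinds of terms:
\begin{itemize}
\item terms with $\partial_t(\phi_t^q)$, which by (2.11) (or (2.12), since $|H|\le 1/\sqrt2$) are bounded by $\frac{\tilde C}{r}\phi_t^{q-\delta'}(|A|^p+C_1|A|^{p-2})$;
\item terms from integrating by parts $\nabla^2H\cdot A$, $\Delta H^2$ and $\Delta|A|^2$, where the derivative now lands on $\phi_t^q$. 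These produce cross terms like $q\phi_t^{q-1}|\nabla\phi_t||A|^{p-1}|\nabla H|$ and $q\phi_t^{q-1}|\nabla\phi_t||A|^{p-3}|\nabla A|\,|A|$.
\end{itemize}
Using (2.10) and Young's inequality, we split each cross term into a small multiple of $\int\phi_t^q|A|^{p-4}|\nabla A|^2$ (absorbed exactly as in (3.4)) plus $\frac{C}{r^2}\int \phi_t^{q-2\delta}|A|^{p}$-type terms.

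\textbf{Step 2: handle the cutoff powers.} The lower powers $\phi_t^{q-2\delta}$ cannot simply be dominated by $\phi_t^q$. We will use Young's inequality in the form
\[ \phi^{q-2\delta}|A|^{a}\le \epsilon\,\phi^q|A|^p + C(\epsilon)\,\phi^{q-2\delta p/(p-a)} \]
for $a<p$. For the top-order term $a=p$, we instead choose $\delta$ small and $q$ large so the losses go into $\phi^{q'}$ with $q'\ge 0$, and then bound by $\mathbf 1_{\Omega_t}$.

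In every case the error is of the form $k'f(t) + \tilde C(p,r)\int_M\phi_t^{q''}d\mu_t$. By Remark 2.9 (with Lemma 2.8) the last integral is at most $\tilde C(p,r)V_0$. This gives
\[ \partial_t f\le \tilde k f + \tilde C V_0 . \]
We also keep the term $-\frac{2}{p-2}\int\phi^q|A|^{p-2}H^2$ and the good term $-2(p-3)\int\phi^q|A|^{p-4}|\nabla A|^2$ to absorb the gradient cross terms.

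\textbf{Step 3: integrate.} Gronwall gives
\[ f(t)\le e^{\tilde kt}\bigl(f(0)+\tilde C V_0 t\bigr). \]
We then bound from both sides:
\begin{itemize}
\item Since $\phi_0\le \mathbf 1_{\Omega_0}\le\mathbf 1_{\hat\Omega_0}$, we have $f(0)\le\int_{\hat\Omega_0}(|A|^p+C_1|A|^{p-2})\,d\mu_0$. Young's inequality, as at the end of Theorem 3.1, turns this into $\int_{\hat\Omega_0}|A|^p + C V_0$.
\item Since $H^2\le\frac12$ and $\phi_t=1$ on $F_t^{-1}(B_{r/2}(x_0))$, we have $f(t)\ge\frac12\int_{F_t^{-1}(B_{r/2})}|A|^p$.
\end{itemize}
The $tV_0$ term is absorbed into $e^{\tilde kt}\tilde C_1V_0$, after enlarging constants or by using $t e^{\tilde k t}\le e^{2\tilde kt}/\tilde k$.

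\textbf{Main obstacle.} The delicate step is Step 2: bookkeeping the cutoff exponents so that every error term either absorbs into the gradient term or into $f$, or becomes a pure volume term. The critical term is the one coming from $\nabla\phi$ hitting $\nabla^2 H\cdot A$. Integrating by parts there gives $|\nabla\phi|\phi^{q-1}|\nabla H||A|^{p-1}$. I would bound it via
\[ \tfrac{C}{r}\phi^{q-\delta}|A|^{p-1}\le \epsilon\phi^q|A|^p + C r^{-p}\phi^{q-p\delta}. \]
This requires $p\delta\le q$, and I would fix $\delta=1/p$ and $q=2$ to make all exponents nonnegative. The resulting $r^{-p}$ dependence is where $\tilde C_1=\tilde C_1(p,r)$ comes from.
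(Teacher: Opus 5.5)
Your overall architecture matches the paper's. You localize the computation of Theorem 3.1 with the cutoff of Definition 2.7 (your $\phi_t^q$ is just another power of $\varphi$). You track $\int\phi(|A|^p+C_1|A|^{p-2}-|A|^pH^2)\,d\mu_t$, trade the $|A|^{p-4}|\nabla A|^2$ term for $-\partial_t\int\phi|A|^{p-2}$ via (2.3), and finish with Gronwall and Young on $|A|^{p-2}(0)$. But Step 2 has a genuine gap. A term $\int_M\phi_t^{q'}|A|^p\,d\mu_t$ with $q'<q$ cannot be turned into a volume term by $\phi_t^{q'}\le\mathbf{1}_{\Omega_t}$. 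That produces $\int_{\Omega_t}|A|^p\,d\mu_t$, which is not a volume and is not controlled by $f(t)$ (near the edge of the support $\phi_t^{q'}/\phi_t^{q}\to\infty$). It is exactly the kind of quantity the theorem is trying to bound, and shrinking $\delta$ or enlarging $q$ does not change this. Such top-order terms genuinely occur: $\int|A|^p\,\partial_t(\phi_t^q)$, $\int|A|^pH^2\,\partial_t(\phi_t^q)$, and $2\int|A|^p\,H\,\nabla H\cdot\nabla(\phi_t^q)$. The last comes from integrating $\Delta H^2$ by parts in the $H^2$-step, and you do not list it. When you bound $\partial_t(\phi_t^q)$ by $\frac{\tilde{C}}{r}\phi_t^{q-\delta'}$ with $|H|$ discarded, you throw away the one feature that makes these terms controllable.

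The missing idea, which is what the paper does in (4.2) and in the estimates leading to (4.5), is to keep the factor $|H|$. It comes from $\partial_tF=\vec{H}$, resp. from $\nabla H^2=2H\nabla H$; that is, use (2.10) rather than (2.11). Then push the cutoff loss \emph{up} one order in $|A|$ instead of down. First, $\frac{C}{r}|H||A|^p\phi^{q-\delta}\le|H||A|^{p+1}\phi^q+Cr^{-(p+1)}|H|\phi^{q-(p+1)\delta}$, which is admissible for your $\delta=1/p$, $q=2$. Next, $|H||A|^{p+1}\phi^q\le\epsilon H^2|A|^{p+2}\phi^q+C_\epsilon|A|^p\phi^q$. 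Finally, convert $\int H^2|A|^{p+2}\phi^q$ through (2.2) into $\partial_t\int\phi^q|A|^pH^2$ plus admissible terms. This is the same mechanism that absorbs the reaction term $H\,\mathrm{tr}(A^3)|A|^{p-2}$, and it is the reason $-|A|^pH^2$ appears in $f$; $H^2\le\frac12$ then keeps $f\ge\frac12\int\phi^q|A|^p$. Relatedly, your identification of the critical term is off. The term $\phi^{q-1}|\nabla\phi||\nabla H||A|^{p-1}$ is lower order and harmless. The cross term from $\int\phi^q|A|^{p-4}\Delta|A|^2$ is $\phi^{q-1}|\nabla\phi||A|^{p-3}|\nabla A|$, without your extra factor $|A|$. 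So it splits into $\epsilon\phi^q|A|^{p-4}|\nabla A|^2$ plus a multiple of $r^{-2}\phi^{q-2\delta}|A|^{p-2}$, which your Young inequality with $a=p-2$ handles. Were it really of order $|A|^{p-2}|\nabla A|$ with no factor of $H$, the scheme would not close.
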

\begin{proof}
Let $\phi_t$ be the cutoff function as defined in Definition 2.7. Using (2.4) and $|\nabla H|\leq1$,
\begin{align}
    \partial_t\int_{M} |A|^p\phi_t d\mu_t &= p\int_{M}|A|^{p-2}(\nabla^2H\cdot A + H\cdot\text{tr}(A^3))\phi_t d\mu_t + \int_{M}|A|^p\partial_t\phi_t d\mu_t \notag\\
    &\leq p^2\int_{M}|A|^{p-2}|\nabla A||\nabla H|\phi_t d\mu_t + p\int_{M}|A|^{p-1}|\nabla H||\nabla \phi_t|d\mu_t \notag \\
    &+ pc_n\int_{M}|A|^{p+1}|H|\phi_t d\mu_t + \int_{M}|A|^{p}|\partial_t\phi_t| d\mu_t  \notag \\
    &\leq \frac{p^4}{4}\int_{M}|A|^{p-4}|\nabla A|^2\phi_t d\mu_t + p\int_{M}|A|^{p-1}|\nabla \phi_t| d\mu_t + pc_n\int_{M}|A|^{p+1}|H|\phi_t d\mu_t\notag \\
    &+ \int_{M}|A|^{p}|\partial_t\phi_t| d\mu_t + \int_M|A|^p\phi_t d\mu_t.
\end{align}
From (2.9) and (2.10) along with Young's inequality we get:\newpage \noindent
\begin{align}
    |A|^p|\partial_t\phi_t| &\leq \frac{\tilde{C}(n,\delta)}{r}|A|^p|H|\phi_t^{1-\delta} \notag \\
    &\leq \frac{(|A|^p)^{\frac{p+1}{p}}|H|\phi_t}{\frac{p+1}{p}} + \left(\frac{\tilde{C}}{r}\right)^{p+1} \frac{(\phi_t^{-\delta})^{p+1}|H|\phi_t}{p+1} \notag \\
&\leq p|A|^{p+1}|H|\phi_t + \tilde{C}^{p+1}\frac{|H|}{r^{p+1}(p+1)}\phi_t^{1-p\delta-\delta}\\
p|A|^{p-1}|\nabla \phi_t| &\leq \frac{\tilde{C}(n,\delta)}{r}p|A|^{p-1}\phi_t^{1-\delta} \notag \\
&\leq \frac{p(|A|^{p-1})^{\frac{p}{p-1}}\phi_t}{\frac{p}{p-1}} + \left(\frac{\tilde{C}}{ r}\right)^{p}\frac{p(\phi_t^{-\delta})^p\phi_t}{p} \notag \\
&\leq p|A|^p\phi_t + \tilde{C}^{p}\frac{1}{r^{p+1}}\phi_t^{1-p\delta},
\end{align}
choose $\delta=\frac{1}{1000p}$ or $\delta = \frac{1}{p}$ where convenient, because $\text{supp}\phi_t\subseteq \Omega_t$ from (2.8) we have  
\[ \int_M \phi_t^{1-p\delta} d\mu_t\leq \text{vol}_{g_t}(\Omega_t), \quad \quad \quad \int_M \phi_t^{1-p\delta-\delta} d\mu_t \leq \text{vol}_{g_t}(\Omega_t); \] using the uniform bound on $|H|$ we see that the coefficients of the cutoff function depend only on  $p,n$, set $a=a(p,n)$,
\begin{align}
    \partial_t\int_{M} |A|^p\phi_t d\mu_t &\leq \frac{p^4}{4}\int_{M}|A|^{p-4}|\nabla A|^2\phi_t d\mu_t + (p+1)\int_{M}|A|^p\phi_t d\mu_t + (p+pc_n)\int_{M}|A|^{p+1}H\phi_t d\mu_t \notag \\
&\quad \quad \quad \quad \quad \quad \quad \quad \quad \quad \quad \quad + \frac{a}{r^{p+1}} \text{vol}_{g_t}(\Omega_t)\notag\\
    &\leq \frac{p^4}{4}\int_{M}|A|^{p-4}|\nabla A|^2\phi_t d\mu_t +e\int_{M}|A|^{p+2}H^2\phi_t d\mu_t + \left(p+1 + \frac{p^2(1+c_n)^2}{4e}\right)\int_{M}|A|^p\phi_t d\mu_t\notag \\
    & \quad \quad \quad \quad \quad \quad \quad \quad \quad \quad \quad \quad +\frac{a}{r^{p+1}}\text{vol}_{g_t}(\Omega_t).
\end{align}
 Using (2.2), (2.4) and the uniform bounds on $|H|,|\nabla H|$,
\begin{align*}
 2\int_{M} |A|^{p+2}H^2\phi_t d\mu_t
    &= \partial_t\int_{M}|A|^pH^2\phi_t d\mu_t - \int_{M}\partial_t(|A|^p)H^2\phi_t d\mu_t - \int_{M} |A|^pH^2\partial_t\phi_t d\mu_t\\
    & + \int_{M}|A|^pH^4\phi_t d\mu_t - \int_{M}|A|^p\Delta H^2\phi_t d\mu_t + 2\int_{M}|A|^p|\nabla H|^2\phi_t d\mu_t \\
    &= \partial_t\int_{M} |A|^pH^2\phi_t d\mu_t - p\int_{M} |A|^{p-2}(\nabla^2H\cdot A + H\cdot\text{tr}(A^3))H^2\phi_t d\mu_t  \\
    & - \int_{M} |A|^pH^2\partial_t\phi_t d\mu_t + \int_{M} |A|^pH^4\phi_t d\mu_t - \int_{M}|A|^p\Delta H^2\phi_t d\mu_t + 2\int_{M}|A|^p|\nabla H|^2\phi_t d\mu_t\\
    &\leq \partial_t\int_{M}|A|^pH^2\phi_t d\mu_t + p^2\int_{M}|A|^{p-2}|\nabla A||\nabla H|H^2\phi_t d\mu_t \\
     &+  2p\int_{M}|A|^{p-1}|H||\nabla H|^2\phi_t d\mu_t + p\int_{M}|A|^{p-1}|\nabla H||\nabla \phi_t|H^2 d\mu_t\\
     &+ pc_n\int_{M}|A|^{p+1}|H|^3\phi_t d\mu_t + \frac{1}{2}\int_{M} |A|^p|\partial_t\phi_t| d\mu_t  + \frac{9}{4}\int_{M} |A|^p\phi_t d\mu_t\\
     &+ 2p\int_{M}|A|^{p-1}|\nabla A||\nabla H||H|\phi_t d\mu_t + 2\int_{M}|A|^p|H||\nabla H||\nabla \phi_t| d\mu_t \\
     &\leq \partial_t\int_{M}|A|^pH^2\phi_t d\mu_t + \frac{p^2}{2}\int_{M}|A|^{p-2}|\nabla A|\phi_t d\mu_t + \left(2p\sqrt{n}+\frac{9}{4}\right)\int_{M}|A|^p\phi_t d\mu_t\\
     &+ (p\sqrt{n}+2)\int_{M} |A|^p|H||\nabla H||\nabla \phi_t| d\mu_t  +pc_n\int_{M}|A|^{p+1}|H|^3\phi_t  d\mu_t \\
     &+\frac{1}{2}\int_{M}|A|^p|\partial_t\phi_t| d\mu_t + 2p\int_{M}|A|^{p-1}|\nabla A||\nabla H||H|\phi_t d\mu_t
\end{align*}
\begin{align*}
     \Rightarrow\frac{7}{4}\int_{M}|A|^{p+2}H^2\phi_t d\mu_t &\leq \partial_t\int_{M}|A|^pH^2\phi_t d\mu_t + \frac{p^4}{4}\int_{M}|A|^{p-4}|\nabla A|^2\phi_t d\mu_t + \left(2pn + \frac{5}{2} + \frac{p^2c^2_n}{4}\right)\int_{M} |A|^p\phi_t d\mu_t\\
     &+ (pn+2)\int_{M} |A|^p|H||\nabla H||\nabla \phi_t| d\mu_t +\frac{1}{2}\int_{M}|A|^p|\partial_t\phi_t| d\mu_t \\
     &+ 2p\int_{M}|A|^{p-1}|\nabla A||\nabla H||H|\phi_t d\mu_t.
\end{align*}
The last term can be estimated directly using Young’s inequality:
\begin{align*}
    2p|A|^{p-1}|\nabla A||\nabla H||H|\phi_t &\leq \frac{1}{4}|A|^{p+2}H^2|\nabla H|^2\phi_t + 4p^2|A|^{p-4}|\nabla A|^2\phi_t
\end{align*}
using (4.2) and Young's inequality:
\begin{align*}
    \frac{1}{2}|A|^p|\partial_t\phi_t| &\leq \frac{1}{4}|A|^{p+2}H^2\phi_t + \frac{p^2}{4}|A|^p\phi_t + \frac{\tilde{C}^{p+1}}{r^{p+1}(p+1)}|H|\phi_t^{1-p\delta-\delta},
\end{align*}
using (4.3) and Young's inequality: 
\begin{align*}
    (pn+2)|A|^p|H||\nabla H||\nabla \phi_t| 
    &\leq (pn+2)|A|^{p+1}|H||\nabla H|\phi_t + (pn+2)\tilde{C}^{p+1}\frac{|H||\nabla H|}{r^{p+1}(p+1)}\phi_t^{1-p\delta-\delta}\\
    &\leq \frac{1}{4}|A|^{p+2}H^2|\nabla H|^2\phi_t + (pn+2)^2|A|^p\phi_t + (pn+2)\tilde{C}^{p+1}\frac{|H||\nabla H|}{r^{p+1}(p+1)}\phi_t^{1-p\delta-\delta}
\end{align*}
choosing $\delta=\frac{1}{1000p}$, applying the uniform bounds on $|H|,|\nabla H|$ along with (2.8) we get
\begin{align}
     \int_{M}|A|^{p+2}H^2\phi_t d\mu_t &\leq \partial_t\int_{M}|A|^pH^2\phi_t d\mu_t+ \left(\frac{p^4}{4}+4p^2\right)\int_{M}|A|^{p-4}|\nabla A|^2\phi_t d\mu_t \notag \\
 &+\left(2pn + \frac{5}{2} + \frac{p^2c^2_n}{4} + 2(pn+2)^2p^2\right)\int_{M} |A|^p\phi_t d\mu_t + \frac{a}{r^{p+1}}\text{vol}_{g_t}(\Omega_t).
\end{align}
Using (4.4) with $e=1$ and (4.5) we have
\begin{align}
    \partial_t\int_{M}|A|\phi_t d\mu_t &\leq 2p^4\int_{M}|A|^{p-4}|\nabla A|^2\phi_t d\mu_t + \partial_t\int_{M}|A|^pH^2\phi_t d\mu_t\notag\\
     &+ ((2n+1)p + 5 + p^2(2(pn+2)^2 + (1+c_n)^2))\int_{M}|A|^p\phi_t d\mu_t + \frac{a}{r^{p+1}}\text{vol}_{g_t}(\Omega_t).
\end{align}
To deal with the first term we use (2.3),\newpage \noindent
\begin{align*}
    2\int_{M}|A|^{p-4}|\nabla A|^2\phi_t d\mu_t
    &= \int_{M} |A|^{p-4}\Delta |A|^2\phi_t d\mu_t - \int_{M}|A|^{p-4}\partial_t(|A|^2)\phi_t d\mu_t + 2\int_{M}|A|^p\phi_t d\mu_t \\
    &= -2(p-4)\int_{M}|A|^{p-4}|\nabla A|^2\phi_t d\mu_t - 2\int_{M}|A|^{p-3}|\nabla A||\nabla \phi_t| d\mu_t + 2\int_{M}|A|^p\phi_t d\mu_t\\
    &-\frac{2}{p-2}\partial_t\int_{M}|A|^{p-2}\phi_t d\mu_t + \frac{2}{p-2}\int_{M}|A|^{p-2}\partial_t\phi_t d\mu_t - \frac{2}{p-2}\int_{M}|A|^{p-2}H^2\phi_t d\mu_t\\
    \Rightarrow\int_{M}|A|^{p-4}|\nabla A|^2\phi_t d\mu_t &\leq \frac{1}{p-3}\int_{M} |A|^{p-3}|\nabla A||\nabla \phi_t|d\mu_t - \frac{1}{(p-2)(p-3)}\partial_t\int_{M}|A|^{p-2}\phi_t d\mu_t\\
    &+ \frac{1}{(p-2)(p-3)}\int_{M}|A|^{p-2}|\partial_t\phi_t|d\mu_t + \frac{1}{(p-3)}\int_{M}|A|^p\phi_t d\mu_t 
\end{align*}
furthermore we have,
\[ \frac{1}{p-3}|A|^{p-3}|\nabla A||\nabla \phi_t| \leq \frac{1}{4}|A|^{p-4}|\nabla A|^2\phi_t + \frac{1}{(p-3)^2}|A|^{p-2}|\nabla \phi_t|^2\phi_t^{-1}\]
which means,
\begin{align*}
     \int_{M}|A|^{p-4}|\nabla A|^2\phi_t d\mu_t &\leq \frac{4}{3(p-3)^2}\int_{M}|A|^{p-2}|\nabla\phi_t|^2\phi_t^{-1}d\mu_t - \frac{4}{3(p-2)(p-3)}\partial_t\int_{M}|A|^{p-2}\phi_t d\mu_t\\
    &+ \frac{4}{3(p-2)(p-3)}\int_{M}|A|^{p-2}|\partial_t\phi_t| d\mu_t + \frac{4}{3(p-3)}\int_{M} |A|^p\phi_t d\mu_t.
\end{align*}
Put this in (4.6),
\begin{align}
    \partial_t\int_{M}|A|^p\phi_t d\mu_t &\leq \frac{-8p^4}{3(p-2)(p-3)}\partial_t\int_{M} |A|^{p-2}\phi_t d\mu_t+ \frac{8p^4}{3(p-2)(p-3)}\int_{M} |A|^{p-2}|\partial_t\phi_t| d\mu_t\notag\\
    &+ \frac{8p^4}{3(p-3)^2}\int_{M}|A|^{p-2}|\nabla \phi_t|^2\phi_t^{-1} d\mu_t + \partial_t\int_{M}|A|^pH^2\phi_t d\mu_t + \frac{a}{r^{p+1}}\text{vol}_{g_t}(\Omega_t)\notag\\
    &+\left(\frac{8p^4}{3(p-3)} + (2n+1)p + 5 + p^2(2(pn+2)^2 + (1+c_n)^2) \right)\int_{M}|A|^p\phi_t d\mu_t.
\end{align}
Using (4.2), (4.3) and Youngs's inequality again:
\begin{align*}
\int_{M}|A|^{p-2}|\nabla \phi_t|^2\phi_t^{-1}d\mu_t &\leq \int_{M}\frac{\tilde{C}^2}{r^2}|A|^{p-2}\phi_t^{1-2\delta}d\mu_t\\
&\leq \int_{M} \frac{(|A|^{p-2})^{\frac{p}{p-2}}\phi_t}{\frac{p}{p-2}}d\mu_t + \left(\frac{\tilde{C}^2}{r^2}\right)^{\frac{p}{2}}\int_M(\phi_t^{-2\delta})^{\frac{p}{2}}\phi_t d\mu_t\\
&\leq p\int_{M}|A|^p\phi_t d\mu_t + \frac{a}{r^{p+1}}\int_{M} \phi_t^{1-p\delta} d\mu_t\\
\int_{M}|A|^{p-2}|\partial_t\phi_t| d\mu_t&\leq \int_{M}\frac{\tilde{C}}{r}|H||A|^{p-2}\phi_t^{1-\delta}d\mu_t\\
&\leq \int_{M}\frac{\sqrt{n}\tilde{C}}{r}|A|^{p-1}\phi_t^{1-\delta}d\mu_t\\
&\leq p\int_{M}|A|^p\phi_t d\mu_t + \frac{a}{r^{p+1}}\int_{M}\phi_t^{1-p\delta}d\mu_t
\end{align*}
now choose $\delta=\frac{1}{p}$; using (2.8) we can rewrite (4.7) as 
\begin{align*}
    \partial_t\int_{M} (|A|^p\phi_t + C_1|A|^{p-2}\phi_t - |A|^pH^2\phi_t)d\mu_t &\leq (2C_1 + C_2 )\int_{M} |A|^p\phi_t d\mu_t+ \frac{(a(2C_1 + 1))}{r^{p+1}}\text{vol}_{g_t}(\Omega_t)  
\end{align*}
where $C_1=C_1(p),C_2=C_2(n,p),a=a(p,n)$. 
\begin{align}
    \Rightarrow\partial_t\int_{M} (|A|^p\phi_t  + C_1|A|^{p-2}\phi_t - |A|^pH^2\phi_t)d\mu_t &\leq k_1\int_{M}|A|^p\phi_t d\mu_t  + \frac{k_2}{r^{p+1}}\text{vol}_{g_t}(\Omega_t)
\end{align}
such that $k_1=k_1(p,n),k_2=k_2(p,n)$ are constants.
Let $\tilde{k}=k_1\cdot k_2$, using $H^2\leq\frac{1}{2}$ we have
\begin{align*}
    \partial_t\int_{M} (|A|^p\phi_t + C_1|A|^{p-2}\phi_t - |A|^{p}H^2\phi_t)d\mu_t &\leq  \frac{\Tilde{k}}{2}\int_{M}|A|^p\phi_t d\mu_t + \Tilde{k}C_1\int_{M} |A|^{p-2}\phi_t d\mu_t \\
    &+ \Tilde{k}\int_{M}|A|^pH^2\phi_t d\mu_t - \Tilde{k}\int_{M}|A|^pH^2\phi_t d\mu_t+ \frac{\Tilde{k}}{r^{p+1}}\text{vol}_{g_t}(\Omega_t) \\
    &\leq  \Tilde{k}\biggl(\int_{M}(|A|^p\phi_t + C_1\int_{M} |A|^{p-2}\phi_t - \int_{M}|A|^pH^2\phi_t)d\mu_t\\
    &+\frac{1}{r^{p+1}}\text{vol}_{g_t}(\Omega_t)\biggr).
\end{align*}
Let  $f:=\int_{M} (|A|^p + C_1|A|^{p-2} - |A|^{p}H^2)d\mu_t$, this means
\begin{align}
    \partial_t f &\leq \Tilde{k}(f+r^{-(p+1)}\text{vol}_{g_t}(\Omega_t) \notag \\ 
    \partial_t f &\leq \Tilde{k}(f+r^{-(p+1)}\text{vol}_{g_0}(\hat{\Omega}_0))
\end{align}
the above inequality follows from Lemma 2.8. Integrate (4.9) with respect to time. For any $t\in(0,T)$ and a choice of localisation function $\phi_t$ as defined in Definition 2.7, we get
\begin{align*}
    \int_{F_t^{-1}(B_{r/2}(x_0))}|A|^p(t) d\mu_t \leq e^{\int_{0}^t\Tilde{k}}\biggl(&\int_{F_0^{-1}(B_{r+K}(x_0))}(|A|^p(0) + C_1|A|^{p-2}(0)) d\mu_0\\
    &+ r^{-(p+1)}\text{vol}_{g_0}(\hat{\Omega}_0)\biggr) \\
    \leq e^{\Tilde{k}t}\biggl(&\int_{F_0^{-1}(B_{r+K}(x_0))}(|A|^p(0) + C_1|A|^{p-2}(0)) d\mu_0\\
    &+ r^{-(p+1)}\text{vol}_{g_0}(\hat{\Omega}_0)\biggr)
\end{align*}
this is true because $\text{supp}\phi_0\subseteq \hat{\Omega}_0:= F_0^{-1}(B_{r+K}(x_0))$. After a final application of Young's inequality on the $|A|^{p-2}(0)$ term,
\begin{equation}
     \int_{F_t^{-1}(B_{r/2}(x_0))}|A|^p(t) d\mu_t \leq 2e^{\tilde{k}t}\biggl(\int_{F_0^{-1}(B_{r+K}(x_0))} |A|^p(0) d\mu_0+ \tilde{C}_1V_0\biggr)
\end{equation}
where $\Tilde{k}=\Tilde{k}(n,p), \tilde{C}_1=\tilde{C}_1(p,r), V_0=\text{vol}_{g_0}(\hat{\Omega}_0),K:=T\cdot\sup_{M\times[0,T)}|H|$.
\end{proof}
\section{$L^\infty$ bound and Smooth Extension}
In this section, we give a Moser iteration framework similar to \cite{Le_Sesum_2009} so that it can be used on the $L^p$ estimates which we have computed in the previous sections. Whereas they assume an optimal integral bound on the mean curvature, we are in the setting of a uniformly bounded mean curvature. Moreover, while they work with embeddings, we adapt their results to proper immersions. Following the construction in Example 3 of \cite{Michael1973-tj} - which is consistent with our definition of norms in Definition 2.3, Definition 2.4 - we consider the space of compactly supported Lipschitz functions $\text{Lip}_C(M) := \{ h \in W^{1,\infty}(M) : \text{supp}\,h \text{ is compact} \}$ as such functions are dense in $W^{1,p}_C(M)$. Since $F$ is continuous, $F(\text{supp}h)$ is compact, for any $h \in \text{Lip}_C(M)$, there exists $x_0 \in \mathbb{R}^{n+1}$ and $r > 0$ such that $F(\text{supp}h) \subset \overline{B}_r(x_0)$, which means $\text{supp}h\subset F^{-1}(\overline{B}_r(x_0))$. Because $F$ is proper and $\overline{B}_r(x_0)$ is compact in $\mathbb{R}^{n+1}$, the preimage $F^{-1}(\overline{B}_r(x_0))$ is compact in $M$. We now state and prove our results, proceeding exactly as in \cite{Le_Sesum_2009} but with necessary minor modifications.
\begin{proposition}
    (c.f. Proposition 3.1, Le, Sesum, \cite{Le_Sesum_2009}). Let $F_t:M^n\rightarrow\mathbb{R}^{n+1},t\in[0,T)$ be a complete, smooth, properly immersed mean curvature flow with $|H|(x,t)\leq c_1,\: \forall (x,t)\in M\times[0,T)$. For all non negative functions $v\in\text{Lip}_C(M\times [0,T))$, there exists a constant $\tilde{c}=\tilde{c}(n,c_1,T)$ such that
    \[ \int_0^T\int_M v^{\frac{2(n+2)}{n}} d\mu_t dt \leq c_n\left(\sup_{t\in[0,T]}\int_M v^2d\mu_t\right)^{\frac{2}{n}}\biggl(\int_0^T\int_M |\nabla v|^2 d\mu_t dt + \tilde{c}\sup_{t\in[0,T]}\int_M v^2 d\mu_t\biggr)\]
    where $d\mu_t$ is the measure induced by the pullback metric.
\end{proposition}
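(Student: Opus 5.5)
The plan is the standard parabolic Sobolev argument: apply the Michael--Simon inequality (Theorem 2.11) at each fixed time to a suitable power of $v$, interpolate with Hölder, and then integrate in time. Throughout, the bound $|H|\le c_1$ is used to absorb the mean curvature term.

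\textbf{Step 1 (fixed-time Sobolev).} Fix $t$ and write $v=v(\cdot,t)$, which is Lipschitz with compact support on $M$. By density we may apply (2.14) to $f=v^{\frac{2(n-1)}{n-2}}$ when $n\ge 3$; this gives the $L^2$ Sobolev inequality with exponent $2^*=\frac{2n}{n-2}$. Using Hölder on $\int v^{\frac{n}{n-2}}|\nabla v|$ and on $\int |H|v^{\frac{2(n-1)}{n-2}}$, and absorbing, we obtain
\[
\Bigl(\int_M v^{\frac{2n}{n-2}}d\mu_t\Bigr)^{\frac{n-2}{n}}\le c_n\int_M\bigl(|\nabla v|^2+H^2v^2\bigr)d\mu_t\le c_n\int_M|\nabla v|^2d\mu_t+c_nc_1^2\int_M v^2d\mu_t .
\]
For $n=2$ (and $n=1$) the argument is done directly: apply (2.14) to $f=v^{2}$, respectively $f=v^{3}$, get the $L^4$ (resp.\ $L^6$) bound, and use Cauchy--Schwarz. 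This yields the same final form, so we concentrate on $n\ge3$.

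\textbf{Step 2 (interpolation).} Set $q=\frac{2(n+2)}{n}$ and write $v^q=v^{\frac4n}\cdot v^2$. Hölder with exponents $\frac n2$ and $\frac{n}{n-2}$ gives
\[
\int_M v^q\,d\mu_t\le\Bigl(\int_M v^2d\mu_t\Bigr)^{\frac2n}\Bigl(\int_M v^{\frac{2n}{n-2}}d\mu_t\Bigr)^{\frac{n-2}{n}} .
\]
Inserting Step 1, bounding the first factor by $\sup_t\int v^2$, and integrating over $t\in[0,T]$ produces
\[
\iint v^q\le c_n\Bigl(\sup_t\int v^2\Bigr)^{\frac2n}\Bigl(\iint|\nabla v|^2+c_1^2\int_0^T\!\!\int v^2\Bigr).
\]
Finally, $\int_0^T\!\int v^2\le T\sup_t\int v^2$. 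This gives the claim with $\tilde c=c_1^2T$, which depends on $(n,c_1,T)$ as stated.

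\textbf{Main obstacle.} The delicate point is justifying Michael--Simon for the immersed, noncompact setting with merely Lipschitz test functions and the power substitution. Properness makes $F_t^{-1}(\overline B_r)$ compact, which handles the support issue. The powers $v^{\frac{2(n-1)}{n-2}}$ are Lipschitz, and compactly supported Lipschitz functions are dense, as discussed just before the proposition. A secondary care point is the absorption step for $H$. The cleanest route is to bound $\int |H|v^{\frac{2(n-1)}{n-2}}\le c_1\|v\|_{2}\|v\|_{2^*}^{\frac{n}{n-2}}$ and then use Young to separate the terms. This keeps the constants depending only on $n$ and $c_1$, with no small-volume hypothesis needed.
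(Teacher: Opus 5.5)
Your proposal is correct. Its skeleton matches the paper's proof: Michael--Simon applied to $v^{2(n-1)/(n-2)}$, then H\"older on $v^{2(n+2)/n}=v^{4/n}v^2$, then integration in time with $\int_0^T\!\int v^2\le T\sup_t\int v^2$. The difference is in how you handle the mean curvature term.

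\textbf{The paper's route.} It uses $|H|\le c_1$ pointwise right away. This leaves the intermediate norm $c\bigl(\int v^{2(n-1)/(n-2)}\bigr)^{(n-2)/(n-1)}$ on the right of (5.1). To reach (5.4) it then needs the interpolation inequality (7.10) of Gilbarg--Trudinger between $L^2$ and $L^{2n/(n-2)}$, with a tuned parameter and a second absorption. As a result its $\tilde c=Tc$ carries the interpolation constants.

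\textbf{Your route.} You split $v^{2(n-1)/(n-2)}=v\cdot v^{n/(n-2)}$ and apply Cauchy--Schwarz to both terms. After dividing by $\|v\|_{2n/(n-2)}^{n/(n-2)}$ (or absorbing via Young), you get directly $\|v\|_{2n/(n-2)}\le c_n\bigl(\|\nabla v\|_2+\|Hv\|_2\bigr)$. This buys three things:
\begin{itemize}
\item There is no interpolation step.
\item The constant is explicit: $\tilde c=c_1^2T$ up to $c_n$.
\item The intermediate inequality $\|v\|^2_{2n/(n-2)}\le c_n\int(|\nabla v|^2+H^2v^2)\,d\mu_t$ is the standard one. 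It holds without any bound on $H$, and $|H|\le c_1$ enters only at the last step.
\end{itemize}
Your argument also covers $n=1,2$, which the paper's proof (written only for $n>2$) omits. For $n=1$, read (2.14) as its $L^\infty$ version $\sup f\le c\int(|\nabla f|+|H|f)$; then $\int v^6\le \sup v^4\int v^2$ finishes.

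What the paper's approach buys is continuity with Le--Sesum's framework, where $H$ is only integrable and interpolation cannot be avoided. Under a uniform bound on $H$, that machinery is more than is needed.
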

\begin{proof}
 Apply the Micheal Simon Sobolev inequality to the function $f=v^{\frac{2(n-1)}{n-2}}, n>2$, we get 
    \begin{align*}
        \biggl(\int_M v^{\frac{2n}{n-2}}d\mu_t\biggr)^{\frac{n-1}{n}} &\leq c_n\biggl(\int_M |\nabla v|v^{\frac{n}{n-2}}d\mu_t + \int_M|H|v^{\frac{2(n-1)}{(n-2)}} d\mu_t\biggr)
    \end{align*}
    we follow the steps exactly as \cite{Le_Sesum_2009}, the only thing different to them is that here we use Hölder only on the first term and use the uniform mean curvature bound in the second term  
    \begin{align*}
        \biggl(\int_M v^{\frac{2n}{n-2}}d\mu_t\biggr)^{\frac{n-2}{n}}&\leq c_n\biggl(\biggl(\int_M|\nabla v|^2d\mu_t\biggr)^{\frac{1}{2}}\biggl(\int_M v^{\frac{2n}{n-2}}d\mu_t\biggr)^{\frac{1}{2}} + c_1 \int_Mv^{\frac{2(n-1)}{(n-2)}} d\mu_t\biggr)^{\frac{n-2}{n-1}}\\
        &\leq c_n\biggl(\biggl(\int_M|\nabla v|^2d\mu_t\biggr)^{\frac{n-2}{2(n-1)}}\biggl(\int_M v^{\frac{2n}{n-2}}d\mu_t\biggr)^{\frac{n-2}{2(n-1)}} + c\bigg(\int_Mv^{\frac{2(n-1)}{(n-2)}} d\mu_t\biggr)^{\frac{n-2}{n-1}}\biggr).
    \end{align*}
    where $c=c(c_1,n)$. We use the product Young's inequality $ab \leq \epsilon a^p + \epsilon^{-q/p}b^q, a,b,\epsilon>0, p,q>1$ such that $\frac{1}{p}+\frac{1}{q}=1$
with $a=\biggl(\int_M v^{\frac{2n}{n-2}}d\mu_t\biggr)^\frac{n-2}{2(n-1)}, b=\biggl(\int_M|\nabla v|^2 d\mu_t\biggr)^{\frac{n-2}{2(n-1)}}$, $p=\frac{2(n-1)}{n}, q=\frac{2(n-1)}{n-2}$ and $\epsilon=\frac{1}{2c_n}$ to arrive at 
\begin{align}
    \biggl(\int_M v^{\frac{2n}{n-2}}d\mu_t\biggr)^{\frac{n-2}{n}} &\leq c_n\biggl(\int_M|\nabla v|^2d\mu_t + c\bigg(\int_Mv^{\frac{2(n-1)}{(n-2)}} d\mu_t\biggr)^{\frac{n-2}{n-1}}\biggr).
\end{align}
 We use inequality (7.10) from \cite{GilbargTrudinger2001} which is of the form \[ \lVert u \rVert_{L^r} \leq e\lVert u \rVert_{L^s} + e^{-\theta}\lVert u \rVert_{L^t}\] where $t<r<s, \theta = \frac{\left(\frac{1}{t}-\frac{1}{r}\right)}{\left(\frac{1}{r}-\frac{1}{s}\right)}, e>0$ with $t=2, r=\frac{2(n-1)}{n-2}, s=\frac{2n}{n-2}$ and so in our case the exponents are simpler than \cite{Le_Sesum_2009} i.e $2 < \frac{2(n-1)}{n-2} < \frac{2n}{n-2}$. Hence we have \begin{equation} \biggl(\int_M v^\frac{2(n-1)}{(n-2)}d\mu_t\biggr)^{\frac{n-2}{2(n-1)}} \leq e\biggl(\int_M v^{\frac{2n}{n-2}}d\mu_t\biggr)^{\frac{n-2}{2n}} + e^{-\theta}\biggl(\int_M v^2 d\mu_t\biggr)^{\frac{1}{2}}\end{equation} with
$\theta=\frac{1-\frac{n-2}{n-1}}{\frac{n-2}{n-1}-\frac{n-2}{n}}=\frac{n}{n-2}$.
Square both sides, use the Peter-Paul inequality with $\varepsilon=1$. Then for a choice of $e^2=\frac{1}{4c_nc}$, (5.2) becomes \begin{equation}
    \left(\int_M v^{\frac{2(n-1)}{n-2}} d\mu_t\right)^{\frac{n-2}{n-1}} \leq \frac{1}{2c_nc}\biggl(\int_M v^{\frac{2n}{n-2}}d\mu_t\biggr)^{\frac{n-2}{n}} + 2(4c_nc)^{\theta}\biggl(\int_M v^2 d\mu_t\biggr).
\end{equation} Finally use (5.3) in (5.1),
\begin{align}
    \biggl(\int_M v^{\frac{2n}{n-2}}d\mu_t\biggr)^{\frac{n-2}{n}} \leq c_n  \left(\int_M |\nabla v|^2 d\mu_t + c\int_Mv^2d\mu_t\right).
\end{align}
Now integrate with respect to time, use Hölder and (5.4) to get
\begin{align}
    \int_0^T\int_M v^{\frac{2(n+2)}{n}}d\mu_t dt &= \int_0^T\int_M v^2 v^{\frac{4}{n}}d\mu_tdt \leq \int_0^T\biggl(\int_M v^{\frac{2n}{n-2}}d\mu_t\biggr)^{\frac{n-2}{n}}\biggl(\int_M v^2d\mu_t\biggr)^{\frac{2}{n}}dt \notag \\ 
    &\leq c_n\left(\sup_{t\in[0,T]}\int_M v^2d\mu_t\right)^{\frac{2}{n}}\biggl(\int_0^T\int_M |\nabla v|^2 d\mu_t dt + \tilde{c}\sup_{t\in[0,T]}\int_M v^2 d\mu_t\biggr)
\end{align}
and $\tilde{c}=\tilde{c}(n,c_1,T)=Tc$.
\end{proof}
In the next lemma we show that a local $L^q$ integrability of the potential $f$ is sufficient to gain integrability for the sub-solution $v$ via the modified Michael-Simon-Sobolev inequality, thus leading to a reverse Hölder inequality. Such a local space time estimate for functions $v$ serves as the starting point for the subsequent Moser bootstrapping procedure.
\begin{lemma}
    (c.f. Lemma 4.1, Le, Sesum, \cite{Le_Sesum_2009}). Consider \begin{equation} (\partial_t - \Delta) v \leq fv, v\geq 0\end{equation} with $\lVert f\rVert_{q,F_{(\cdot)}^{-1}(B_r(x_0)),[t_0,T)}:=X<+\infty,q>\frac{n+2}{2}$, where $F_{(\cdot)}$ is a complete, smooth, properly immersed mean curvature flow $t\mapsto F_t,t\in[t_0,T)$ such that $\sup_{[t_0,T)}\sup_{F^{-1}_t(B_r(x_0))}|H|\leq c_1$ for $x_0\in\mathbb{R}^{n+1},r>0$. Let $\beta \geq 2$, choose the cutoff function $\eta$ as defined in Lemma 2.11, then
    \begin{align*}
   \biggl(\int_{t_0}^T\int_{F_t^{-1}(B_r(x_0))} (\eta^2 v^\beta)^\frac{n+2}{n} d\mu_t dt\biggr)^{\frac{n}{n+2}}  &\leq (2C\Lambda(\beta)(1+X))^{1+\nu}\biggl(\int_{t_0}^T\int_{F_t^{-1}(B_r(x_0))}  \eta^2v^\beta d\mu_tdt\\
    &+ \int_{t_0}^T\int_ {F_t^{-1}(B_r(x_0))} v^\beta \biggl( 2\eta|(\partial_t - \Delta)\eta| + |\nabla \eta|^2 \biggr) d\mu_t dt \biggr)
    \end{align*}
and $C=C(c_n,c_1,T), \Lambda(\beta)=100\beta \geq 1, \nu = \frac{n+2}{2q-(n+2)}$. 
\end{lemma}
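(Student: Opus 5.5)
The plan is the standard energy estimate for subsolutions, following Le--Sesum, with the Sobolev inequality of Proposition 5.1 and a Hölder--interpolation step that absorbs the potential $f$.

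\textbf{Energy inequality.} Multiply $(\partial_t-\Delta)v\le fv$ by $\eta^2 v^{\beta-1}$ and integrate over $M$ at each time. Since $\operatorname{supp}\eta(\cdot,t)\subseteq F_t^{-1}(B_r(x_0))$, all integrals are effectively over $F_t^{-1}(B_r(x_0))$. Use $\partial_t d\mu_t=-H^2d\mu_t\le 0$ from (2.1) and integrate the Laplacian term by parts. Then write $w:=v^{\beta/2}$; for $\beta\ge2$ one has $\frac{4(\beta-1)}{\beta^2}\ge \frac{1}{\beta}$. The cross term $\eta v^{\beta-1}\nabla\eta\cdot\nabla v$ is absorbed by Cauchy--Schwarz. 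This should give
\begin{align*}
\sup_{t}\int_M \eta^2 w^2\,d\mu_t+\int_{t_0}^T\!\!\int_M|\nabla(\eta w)|^2\,d\mu_t\,dt
&\le C\beta\int_{t_0}^T\!\!\int_M |f|\eta^2w^2\,d\mu_t\,dt\\
&\quad+C\beta\int_{t_0}^T\!\!\int_M w^2\bigl(2\eta|(\partial_t-\Delta)\eta|+|\nabla\eta|^2\bigr)\,d\mu_t\,dt.
\end{align*}
Here I use $\eta(\cdot,t_0)=0$ from the time cutoff $\gamma$ of Lemma 2.11. I also write $\eta\partial_t\eta-\eta\Delta\eta$ in the combination $2\eta|(\partial_t-\Delta)\eta|+|\nabla\eta|^2$, exactly as in the target inequality.

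\textbf{Sobolev step.} Apply Proposition 5.1 to $\eta w\in\mathrm{Lip}_C$; it is compactly supported by properness, as discussed at the start of Section 5. This gives
\[
\|\eta w\|_{\frac{2(n+2)}{n}}^2\le c\Bigl(\sup_t\int\eta^2w^2+\iint|\nabla(\eta w)|^2\Bigr).
\]
The extra term $\tilde c\sup_t\int\eta^2 w^2$ in Proposition 5.1 is harmless because it already appears on the left of the energy inequality; this is where $C$ acquires its dependence on $c_1$ and $T$. Note that $\|\eta w\|^2_{\frac{2(n+2)}{n}}$ is precisely the left-hand side of the claim, since $(\eta^2v^\beta)^{\frac{n+2}{n}}=(\eta w)^{\frac{2(n+2)}{n}}$.

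\textbf{Absorbing the potential (main obstacle).} The term $\beta\iint|f|\eta^2w^2$ is the crux. Apply Hölder with $\|f\|_q=X$:
\[
\iint|f|(\eta w)^2\le X\,\|\eta w\|_{2q'}^2 .
\]
Since $q>\frac{n+2}{2}$, we have $2<2q'<\frac{2(n+2)}{n}$. Interpolate, as in the Gilbarg--Trudinger inequality (7.10) used in Proposition 5.1:
\[
\|\eta w\|_{2q'}\le \varepsilon\|\eta w\|_{\frac{2(n+2)}{n}}+\varepsilon^{-\mu}\|\eta w\|_2 ,
\]
with $\mu$ determined by the exponents. Choose $\varepsilon^2\sim (2C\beta X)^{-1}$ so that the first piece is absorbed into the left side. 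The remaining coefficient is then of order $C\beta X(C\beta X)^{\mu}$, and the exponent computation should give exactly $1+\nu$ with $\nu=\frac{n+2}{2q-(n+2)}$. I expect the delicate points to be this exponent bookkeeping and replacing $X$ by $1+X$ to cover small $X$ uniformly. Collecting everything, and using $\Lambda(\beta)=100\beta\ge 1$ to dominate all the $\beta$-dependent constants, yields the stated reverse Hölder inequality with the factor $(2C\Lambda(\beta)(1+X))^{1+\nu}$ multiplying $\iint\eta^2v^\beta+\iint v^\beta(2\eta|(\partial_t-\Delta)\eta|+|\nabla\eta|^2)$.
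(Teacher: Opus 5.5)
Your proposal is correct and follows the paper's proof. Both test with $\eta^2v^{\beta-1}$ to get the energy estimate (the paper simply quotes this step from Le--Sesum), apply Proposition 5.1 to $\eta v^{\beta/2}$ with the $\tilde c\sup_t\int\eta^2v^\beta$ term controlled by that energy bound, then use H\"older on the $f$-term, interpolate via Gilbarg--Trudinger (7.10), and absorb with $\varepsilon\sim(2C\Lambda(\beta)(1+X))^{-1}$. The only difference is cosmetic: you interpolate $\eta v^{\beta/2}$ in $L^{2q'}$ between $L^2$ and $L^{2(n+2)/n}$ and then square, whereas the paper interpolates $\eta^2v^\beta$ in $L^{q/(q-1)}$ between $L^1$ and $L^{(n+2)/n}$; both give the same exponent $\nu=\frac{n+2}{2q-(n+2)}$.
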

\begin{proof}
    We follow essentially the same argument as in the proof of Lemma 4.1 from \cite{Le_Sesum_2009}. However, our assumptions differ slightly: instead of an optimal integral bound on the mean curvature we have that for any $x_0\in\mathbb{R}^{n+1},r>0$, $\sup_{[t_0,T)}\sup_{F^{-1}_t(B_r(x_0))}|H|\leq c_1$. Accordingly, only minor modifications to their proof must be made. The initial part of the proof carries over verbatim from \cite{Le_Sesum_2009} and we begin from the stage where the test function has been suitably controlled. Let $q>\frac{n+2}{2}, \beta \geq 2$ and set $ \Lambda(\beta)=100\beta\geq 1$. Then for any $s\in(0,T]$ and test function $\eta^2v^{\beta-1}$ where $\eta$ is the cutoff function constructed in Lemma 2.11, we obtain\newpage \noindent
\begin{align*}
    \int_0^s\int_M|\nabla(\eta v^{\beta/2})|^2 d\mu_t dt + \int_{M} v^\beta \eta^2 d\mu_s &\leq \Lambda(\beta) \biggl(\int_0^s\int_M v^\beta(2\eta|(\partial_t -\Delta)\eta|+ |\nabla \eta|^2)d\mu_t dt\\
    &+ \int_0^s\int_M |f|\eta^2v^\beta d\mu_t dt\biggr)=: Y
\end{align*}
Plug in this estimate into (5.5) which is the Micheal-Simon-Sobolev type inequality with the non-negative function as $\eta v^{\beta/2}$; note that Proposition 5.1 applies to such a function since $|H|$ is uniformly bounded on its support
\begin{align*}
    \int_0^T\int_M (\eta v^{\frac{\beta}{2}})^\frac{2(n+2)}{n}d\mu_tdt = \int_0^T\int_M (\eta^2 v^\beta)^{\frac{n+2}{n}}d\mu_t dt &\leq \biggl(\sup_{t\in[0,T]}\int_M \eta^2v^\beta d\mu_t\biggr)^{\frac{2}{n}}\biggl(c_n\int_0^T\int_M|\nabla(\eta v^{\beta/2})|^2d\mu_t dt\\
    &+ \tilde{c}\sup_{t\in[0,T]}\int_M\eta^2v^\beta d\mu_t\biggr)\\
    \int_0^T\int_M (\eta^2v^\beta)^{\frac{n+2}{n}}d\mu_t dt &\leq Y^{\frac{2}{n}}(c_nY + \tilde{c} Y).
\end{align*}
where $\tilde{c} = \tilde{c}(n, c_1, T)$ is the constant from Proposition 5.1 determined by the uniform spacetime bound on the mean curvature. The above inequality is equivalent to
\begin{align*}
    \lVert \eta^2v^\beta\rVert_{\frac{n+2}{n},M,[0,T)} &\leq CY
\end{align*}
with $C=C(c_n,c_1,T)$. This means 
\begin{align*}
    \lVert \eta^2v^\beta \rVert_{\frac{n+2}{n},M,[0,T)} \leq C\Lambda(\beta)\biggl(\int_0^T\int_M  v^\beta(2\eta((\partial_t -\Delta)\eta)+ |\nabla \eta|^2)d\mu_t dt + \int_0^T\int_M |f|\eta^2v^\beta d\mu_t dt\biggr)
\end{align*}
In contrast to \cite{Le_Sesum_2009}, we only have a local spacetime $L^q$ bound on $f$. Recalling that the cutoff function $\eta$ is supported on the spacetime cylinder $W := \{ (x,t) : |F(x,t)-x_0|^2 \leq r^2, t \geq t_0 \}$, we apply Hölder's inequality to the second term on the right-hand side with respect to the spacetime measure $d\omega := d\mu_t dt$,
\begin{align}
    \lVert \eta^2v^\beta \rVert_{\frac{n+2}{n},F_{(\cdot)}^{-1}(B_r(x_0)),[t_0,T)}&\leq C\Lambda(\beta)\biggl(\int_0^T\int_M  v^\beta(2\eta|(\partial_t -\Delta)\eta|+ |\nabla \eta|^2)d\mu_t dt \notag \\
    &+ \lVert f \rVert_{q,F_{(\cdot)}^{-1}(B_r(x_0)),[t_0,T)}\lVert \eta^2v^\beta\rVert_{\frac{q}{q-1},F_{(\cdot)}^{-1}(B_r(x_0)),[t_0,T)}\biggr)
\end{align}
Since, $q>\frac{n+2}{2}$, it holds that $1<\frac{q}{q-1}<\frac{n+2}{n}$, with the help of the interpolation inequality (7.10) from \cite{GilbargTrudinger2001} we get \begin{equation}
     \lVert \eta^2v^\beta\rVert_{\frac{q}{q-1},F_{(\cdot)}^{-1}(B_r(x_0)),[t_0,T)} \leq \varepsilon \lVert\eta^2v^\beta\rVert_{\frac{n+2}{n},F_{(\cdot)}^{-1}(B_r(x_0)),[t_0,T)} + \varepsilon^{-\nu}\lVert\eta^2v^\beta\rVert_{1,F_{(\cdot)}^{-1}(B_r(x_0)),[t_0,T)}
     \end{equation}
with $\nu = \frac{n+2}{2q-(n+2)}$. Choosing $\varepsilon = \frac{1}{2C\Lambda(\beta)(1+X)}$ in (5.8) and using this in (5.7) yields
\begin{align}
    \biggl(\int_{t_0}^T\int_{F_t^{-1}(B_r(x_0))} (\eta^2 v^\beta)^\frac{n+2}{n} d\mu_t dt\biggr)^{\frac{n}{n+2}}  &\leq (2C\Lambda(\beta)(1+X))^{1+\nu}\biggl( \int_{t_0}^T\int_{F_t^{-1}(B_r(x_0))}  \eta^2v^\beta d\mu_tdt \notag\\
    &+ \int_{t_0}^T\int_ {F_t^{-1}(B_r(x_0))} v^\beta \biggl( 2\eta|(\partial_t - \Delta)\eta| + |\nabla \eta|^2 \biggr) d\mu_t dt\biggr).
\end{align}
\end{proof}
\noindent
Our goal is to employ the Moser iteration technique to bound the $L^\infty$-norm of $v$ over a smaller spacetime domain by a lower-order $L^{\beta_0}$-norm ($\beta_0 \geq 2$) defined over a larger domain. This nested domain approach is standard in obtaining local estimates for sub-solutions to (5.6). We proceed with the following setup:
\begin{align*}
    r_k&:= \frac{1}{2}+\frac{1}{2^{k+1}}, \\
    \rho_k&:=r_{k-1}-r_k=\frac{1}{2^{k+1}},\\
    t_k&:= \frac{1}{2}\left(1-\frac{1}{4^k}\right),\\
    t_k&-t_{k-1}=6\rho_k^2, \\
    \eta_k(x,t)&:=\psi_k(|F_t(x)-x_0|^2) \times \gamma_k(t)\\
    \psi_k&:=\varphi_k^{1/\delta}, 0<\delta<1\\
    \varphi_k(s)&:=\begin{cases}
        1, \quad \quad \quad \:\: s\leq r_k^2\\
        \in[0,1],\quad r_k^2\leq s \leq r_{k-1}^2\\
        0, \quad \quad \quad \:\: s\geq r_{k-1}^2
    \end{cases}\\
    \gamma_k(t)&:=\begin{cases}
        1, \quad \quad \quad \: \: t_k\leq t\leq 1\\
        \in[0,1], \quad t_{k-1}\leq t\leq t_k\\
        0, \quad \quad \quad \:\: t\leq t_{k-1}
    \end{cases}
\end{align*}
such that it satisfies $|\varphi'_k|\leq \frac{c_n}{\rho_k},|\varphi_k''|\leq \frac{c_n}{\rho_k^2}, |\gamma'_k|\leq\frac{c_n}{\rho_k^2}$. Also 
\begin{equation}
        \sup_{t\in[0,T]}\sup_{x\in M} \biggl(2\eta_k|(\partial_t - \Delta)\eta_k| + |\nabla \eta_k|^2 + \eta_k^2\biggr) \leq \frac{\tilde{C}(n,\delta)}{\rho_k^2} = c_n4^{k+1}
\end{equation} holds due to (2.12) and (2.13) for a fixed $\delta\in(0,1)$. We localize the integral to the spatial region $F_t^{-1}(B_{r_k}(x_0))$ for each $t\in[t_{k},1]$, by noting that $\eta_k = 0$  whenever $x\notin F_t^{-1}(B_{r_{k-1}}(x_0)),$ for all $t \in [t_{k-1}, 1]$. Consequently, $\eta_k$ is supported within nested space time regions defined by $W_k :=\{ (x,t) : |F(x,t)-x_0|^2 \leq r_{k-1}^2 , t\geq t_{k-1} \}$ for each iteration $k$.
\begin{lemma}
    (c.f. Lemma 5.2, Le, Seseum, \cite{Le_Sesum_2009}). Let $\lambda=\frac{n+2}{n}$ and $T\geq 1$. Let $q>\frac{n+2}{2}$ and $\beta_0\geq2$. If $v$ is a sub-solution to (5.6) with $\lVert f\rVert_{q,F_{(\cdot)}^{-1}(B_{r_0}(x_0)),[t_0,T)}:=X<\infty$ where $F_{(\cdot)}$ is a complete, smooth, properly immersed mean curvature flow $t\mapsto F_t,t\in[t_0,T)$ such that $\sup_{[t_0,T)}\sup_{F^{-1}_t(B_{r_0}(x_0))}|H|\leq c_1$ for $x_0\in\mathbb{R}^{n+1},r_0\geq 1,t_0\leq 0$, then there exists a constant $\hat{E}=\hat{E}(n,q,T,c_1,X,\beta_0)=((C(n,c_1,T)(1+X)200\beta_0\lambda)^{1+\nu}c_n)^{\frac{n^2}{\beta_0}}$ satisfying
    \begin{equation*}
    \sup_{t \in [\frac{1}{2},1]}\sup_{F_t^{-1}(B_{1/2}(x_0))} |v| \leq \hat{E}\left(\int_0^1\int_{F_t^{-1}(B_1(x_0))} v^{\beta_0} d\mu_t dt\right)^{\frac{1}{\beta_0}}. 
    \end{equation*}
\end{lemma}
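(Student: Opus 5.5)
The plan is a standard Moser iteration on the nested cylinders $W_k$ just set up, with Lemma 5.2 as the step-by-step engine. Put $\beta_k := \beta_0\lambda^k$ with $\lambda=\frac{n+2}{n}$. For each $k\ge1$, apply Lemma 5.2 with $\beta=\beta_{k-1}\ge 2$ and the cutoff $\eta_k$, working on the ball of radius $r_{k-1}\le r_0$. This is allowed because $F_t^{-1}(B_{r_{k-1}}(x_0))\subseteq F_t^{-1}(B_{r_0}(x_0))$. On that set $|H|\le c_1$, and $\|f\|_{q}\le X$ holds on the smaller region as well.

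The right-hand side of Lemma 5.2 will be controlled by (5.10). The weight $\eta_k^2 + 2\eta_k|(\partial_t-\Delta)\eta_k| + |\nabla\eta_k|^2$ is at most $c_n4^{k+1}$ and is supported in $W_k$. This gives
\[
\Bigl(\int\!\!\int_{W_{k+1}} v^{\beta_{k-1}\lambda}\Bigr)^{1/\lambda}\le \bigl(2C\cdot 100\beta_0\lambda^{k-1}(1+X)\bigr)^{1+\nu}c_n4^{k+1}\int\!\!\int_{W_k} v^{\beta_{k-1}} .
\]
Here we use that $\eta_k\equiv1$ on $\{|F-x_0|\le r_k,\ t\ge t_k\}$, the cylinder on which the next cutoff lives. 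Next raise both sides to the power $1/\beta_{k-1}$ and write $I_k:=\|v\|_{L^{\beta_k}(W_{k+1})}$. The result is
\[
I_k\le \bigl(D\,\Theta^{k}\bigr)^{1/(\beta_0\lambda^{k-1})}I_{k-1},
\]
with $D=(200C\beta_0(1+X))^{1+\nu}c_n$ and $\Theta=4\lambda^{1+\nu}$.

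Iterating gives
\[
I_k\le D^{\sum_j 1/(\beta_0\lambda^{j})}\,\Theta^{\sum_j j/(\beta_0\lambda^{j})}\,\|v\|_{L^{\beta_0}(W_1)} .
\]
Both series converge because $\lambda>1$. Indeed $\sum_j\lambda^{-j}=\frac{n+2}{2}$ and $\sum_j j\lambda^{-j}\lesssim n^2$, so the total prefactor is bounded by a constant of the form $((C(1+X)200\beta_0\lambda)^{1+\nu}c_n)^{n^2/\beta_0}$, as claimed. (The $\lambda^k$ factors get absorbed after enlarging the base by a $\lambda$ and using $n^2\ge$ the relevant sums.)

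Let $k\to\infty$. Since $r_k\downarrow\frac12$ and $t_k\uparrow\frac12$, we have $\bigcap_k W_k\supseteq\{(x,t):|F(x,t)-x_0|\le\frac12,\ t\in[\frac12,1]\}$, and $\|v\|_{L^{\beta_k}}\to\|v\|_{L^\infty}$ on that set. Together with $W_1\subseteq\{|F-x_0|\le1,\ t\in[0,1]\}$, this yields the statement. Here $t_0\le0$ and $T\ge1$ make $[0,1]\subseteq[t_0,T)$, which is needed so that the time cutoffs $\gamma_k$ vanish at $t_{k-1}\ge0$ inside the allowed interval.

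The main obstacle is bookkeeping rather than analysis. First, Lemma 5.2 is stated with $\eta$ built at the scale $r$ of Lemma 2.11, with $t_1-t_0=r^2$, while here the annuli have width $\rho_k$ and time steps $6\rho_k^2$. So I must check that the proof of Lemma 5.2 uses only the pointwise bound (5.10) and the support of $\eta_k$. Second, the passage to a limit of $L^{\beta_k}$ norms on time-dependent domains $F_t^{-1}(B_{r_k})$ has to be justified. I would handle this by working with the space–time measure $d\mu_t\,dt$ on the fixed sets $W_k\subset M\times[0,1]$. These have finite measure by Lemma 2.8, so the standard $L^p\to L^\infty$ limit applies.
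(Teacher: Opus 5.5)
Your proposal is correct and follows essentially the same route as the paper. You apply the reverse H\"older estimate (5.9) with the cutoff $\eta_k$ and $\beta=\beta_0\lambda^{k-1}$, bound the cutoff weights by (5.10), iterate, bound the series $\sum_j\lambda^{-j}$ and $\sum_j j\lambda^{-j}$ by $n^2$ to get the stated $\hat{E}$, and let $k\to\infty$. Your indexing ($\eta_k\equiv1$ on $W_{k+1}$ with weights supported in $W_k$, matching the definition of $W_k$) and your justification of the $L^{\beta_k}\to L^\infty$ limit on fixed space-time sets are, if anything, slightly more careful than the paper's write-up, which states these support relations off by one.
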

\begin{proof}
Let $\beta\geq 2$. Since $\sup_{[t_0,T)}\sup_{F^{-1}_t(B_{r_0}(x_0))}|H|\leq c_1$, $\eta_k \equiv 1$ on $W_k$ and $\text{supp}(\eta_k) \subset W_{k-1}$, it follows from (5.9) that,
    \begin{align*}
       \left(\int_{t_k}^1\int_{F_t^{-1}(B_{r_k}(x_0))}(v^\beta)^{\frac{n+2}{n}} d\mu_t dt\right)^{\frac{n}{n+2}} &\leq \left(\int_{t_{k-1}}^1\int_{F_{t_{k-1}}^{-1}(B_{r_{k-1}}(x_0))} (\eta_k^2v^\beta)^\frac{n+2}{n} d\mu_t dt\right)^{\frac{n}{n+2}}\\
   &\leq (2C\Lambda(\beta)(1+X))^{1+\nu}\int_{t_{k-1}}^1\int_{F_{t_{k-1}}^{-1}(B_{r_{k-1}}(x_0))} v^\beta\biggl(2\eta_k|(\partial_t - \Delta)\eta_k| \\
   &\quad \quad \quad \quad \quad \quad \quad \quad \quad \quad + |\nabla\eta_k|^2 + \eta_k^2\biggr)d\mu_t dt\\
   &\leq (2C\Lambda(\beta)(1+X))^{1+\nu}c_n4^{k+1}\int_{t_{k-1}}^1\int_{F_t^{-1}(B_{r_{k-1}}(x_0))} v^\beta d\mu_t dt\\
   &\leq E4^{k-1}\beta^{1+\nu}\int_{t_{k-1}}^1\int_{F_t^{-1}(B_{r_{k-1}}(x_0))} v^\beta d\mu_t dt
    \end{align*}
    where we used (5.10) in the second last inequality and $\nu=\frac{n+2}{2q-(n+2)}$ from (5.8). In the last inequality we have made use of $\Lambda(\beta)=100\beta$ and so $E=(C(1+X)200)^{1+\nu}c_n4^2$. Also set $\lambda=\frac{n+2}{n}$,
    \begin{align*}
        \left(\int_{t_k}^1\int_{F_t^{-1}(B_{r_k}(x_0))} v^{\lambda\beta}d\mu_t dt\right)^{\frac{1}{\lambda\beta}} \leq E^{\frac{1}{\beta}}4^{\frac{k-1}{\beta}}\beta^{\frac{1+\nu}{\beta}}\biggl(\int_{t_{k-1}}^1\int_{F_t^{-1}(B_{r_{k-1}}(x_0))} v^\beta d\mu_t dt\biggr)^{\frac{1}{\beta}}.
    \end{align*}
    We want to set up a exponent sequence so that we can do Moser iteration, for this we replace $\beta$ with $\lambda^{k-1}\beta_0$ and iterate the norms with exponent $\beta_k=\lambda^k\beta_0$,
    \begin{align*}
        \left(\int_{t_k}^1\int_{F_t^{-1}(B_{r_k}(x_0))} v^{\lambda^k\beta_0}d\mu_t dt\right)^{\frac{1}{\lambda^k\beta_0}} &\leq E^{\frac{1}{\lambda^{k-1}\beta_0}}4^{\frac{k-1}{\lambda^{k-1}\beta_0}}(\lambda^{k-1}\beta_0)^{\frac{1+\nu}{\lambda^{k-1}\beta_0}}\biggl(\int_{t_{k-1}}^1\int_{F_t^{-1}(B_{r_{k-1}}(x_0))} v^{\lambda^{k-1} \beta_0} d\mu_t dt\biggr)^{\frac{1}{\lambda^{k-1}\beta_0}}\\
        &\leq (E\beta_0^{1+\nu})^{\frac{1}{\lambda^{k-1}\beta_0}}(\lambda^{1+\nu}4)^{\frac{k-1}{\lambda^{k-1}\beta_0}}\biggl(\int_{t_{k-1}}^1\int_{F_t^{-1}(B_{r_{k-1}}(x_0))} v^{\lambda^{k-1} \beta_0} d\mu_t dt\biggr)^{\frac{1}{\lambda^{k-1}\beta_0}}\\
        \left(\int_{t_k}^1\int_{F_t^{-1}(B_{r_k}(x_0))} v^{\beta_k}d\mu_t dt\right)^{\frac{1}{\beta_k}} &\leq(E\beta_0^{1+\nu})^{\frac{1}{\beta_{k-1}}}(\lambda^{1+\nu}4)^{\frac{k-1}{\beta_{k-1}}}\biggl(\int_{t_{k-1}}^1\int_{F_t^{-1}(B_{r_{k-1}}(x_0))} v^{\beta_{k-1}} d\mu_t dt\biggr)^{\frac{1}{\beta_{k-1}}}\\
        &\leq (E\beta_0^{1+\nu})^{\frac{1}{\beta_0}\sum_{j=0}^{k-1}\frac{1}{\lambda^j}}(\lambda^{1+\nu}4)^{\frac{1}{\beta_0}\sum_{j=0}^{k-1}\frac{j}{\lambda^j}}\biggl(\int_{t_0}^1\int_{F_t^{-1}(B_{r_0}(x_0))} v^{\beta_0} d\mu_t dt\biggr)^{\frac{1}{\beta_0}}
    \end{align*}
    Let $k\rightarrow \infty$, we have $$ \sum_{j=0}^\infty \frac{1}{\lambda^j} = \frac{\lambda}{\lambda -1} = \frac{n+2}{2} \leq n^2, \sum_{j=0}^\infty \frac{j}{\lambda^j} = \frac{\lambda}{(\lambda -1)^2} = \frac{(n+2)n}{4} \leq n^2$$ and so
    \begin{align}
\sup_{t \in [\frac{1}{2},1]}\sup_{F_t^{-1}(B_{1/2}(x_0))} |v| \leq (E&\beta_0^{1+\nu})^{\frac{n^2}{\beta_0}}(\lambda^{1+\nu}4)^{\frac{n^2}{\beta_0}}\biggl(\int_0^1\int_{F_t^{-1}(B_1(x_0))} v^{\beta_0} d\mu_t dt\biggr)^{\frac{1}{\beta_0}} \notag \\
\sup_{t \in [\frac{1}{2},1]}\sup_{F_t^{-1}(B_{1/2}(x_0))} &|v| \leq \hat{E}\biggl(\int_0^1\int_{F_t^{-1}(B_1(x_0))} v^{\beta_0} d\mu_t dt \biggr)^{\frac{1}{\beta_0}}
    \end{align}
    where $\hat{E}=\hat{E}(n,q,T,c_1,X,\beta_0)=(E(\beta_0\lambda)^{1+\nu}4)^\frac{n^2}{\beta_0}=((C(n,c_1,T)(1+X)200\beta_0\lambda)^{1+\nu}c_n)^{\frac{n^2}{\beta_0}}$. 
\end{proof}
\begin{theorem}
    Let $F:M^n\times[0,T)\rightarrow\mathbb{R}^{n+1}$ be a complete, smooth, properly immersed mean curvature flow. Assume the following uniform bounds \[ \sup_{M^n\times[0,T)}|H|(x,t) \leq \frac{1}{\sqrt{2}}, \quad \quad \quad \quad \sup_{M^n\times[0,T)} |\nabla H|(x,t) \leq 1.\] Let $s>n+2, T\geq 1, K=T\cdot\sup_{M^n\times[0,T)}|H|, x_0\in\mathbb{R}^{n+1}, \hat{\Omega}_0:=F_0^{-1}(B_{2+K}(x_0))$. Define \[ A_0:=\lVert A(0) \rVert_{s,\hat{\Omega}_0}, \quad \quad V_0:=\text{vol}_{g_0}(\hat{\Omega}_0)=\int_{F_0^{-1}(B_{2+K}(x_0))}d\mu_0.\] If $A_0<\infty, V_0<\infty$, then there exists constants $\tilde{E}=\tilde{E}(n,s,T,V_0), \tilde{k}=\tilde{k}(n,s)$ and $\tilde{C}_1=\tilde{C}_1(s)$ such that \begin{equation}
        \sup_{t\in[\frac{1}{2},1]}\sup_{F_t^{-1}(B_{1/2}(x_0))} |A| \leq \tilde{E}\biggl\{1+\left( \int_{F_0^{-1}(B_{2+K}(x_0))}|A|^s(0) d\mu_0 + \tilde{C}_1V_0\right)^{\tilde{s}}\left(\int_0^1 e^{\tilde{k}t} dt\right)^{\tilde{s}}\biggr\} 
    \end{equation}
     where $\tilde{s}=\tilde{s}(s,n)$.
\end{theorem}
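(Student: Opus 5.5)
The plan is to apply the Moser iteration of Lemma 5.3 to $v:=|A|$ (or a smoothed version $v=\sqrt{|A|^2+1}$ to avoid the zero set), with potential $f:=2|A|^2$, and to control both the potential norm $X$ and the starting norm $\lVert v\rVert_{\beta_0}$ using the local $L^p$ bound of Theorem 4.1.

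\textbf{Step 1 (sub-solution).} From (2.3) and Kato's inequality $|\nabla|A||\le|\nabla A|$ we get, wherever $|A|\neq0$,
\[
(\partial_t-\Delta)|A|=\frac{1}{|A|}\left(|A|^4-|\nabla A|^2+|\nabla|A||^2\right)\le |A|^3 .
\]
Hence $v=|A|$ satisfies $(\partial_t-\Delta)v\le fv$ with $f=|A|^2$. If one prefers smoothness, $v=\sqrt{1+|A|^2}$ gives the same inequality with $f=|A|^2$. This puts us in the setting of Lemma 5.2 and Lemma 5.3.

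\textbf{Step 2 (potential in $L^q$).} Choose $q:=s/2>\frac{n+2}{2}$, which is exactly where the hypothesis $s>n+2$ is used. Then $\lVert f\rVert_q^q=\int\int |A|^{s}$ over $\bigcup_t F_t^{-1}(B_{1}(x_0))\times[0,1]$. To bound this, apply Theorem 4.1 with $p=s$ and $r=2$; we need $s\ge4$, which holds since $s>n+2\ge 3$, and $s\geq 4$ for $n\ge2$. Theorem 4.1 is stated for $r\le1$, so either cover $B_1(x_0)$ by finitely many balls $B_{r/2}(y_i)$ with $r=1$ and $B_{1+K}(y_i)\subset B_{2+K}(x_0)$, or simply note that the proof of Theorem 4.1 works for $r=2$ with $r$-dependent constants. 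This gives, for each $t$,
\[
\int_{F_t^{-1}(B_1(x_0))}|A|^s(t)\,d\mu_t\le 2e^{\tilde kt}\Big(\int_{\hat\Omega_0}|A|^s(0)\,d\mu_0+\tilde C_1V_0\Big)=:2e^{\tilde k t}Q .
\]
Integrating in $t\in[0,1]$ yields $X^{q}\le 2Q\int_0^1e^{\tilde kt}\,dt$. Since the Moser flow is on $[0,1]\subset[0,T)$ (using $T\ge1$), we take $t_0=0$ and $r_0=1$ in Lemma 5.3.

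\textbf{Step 3 (iteration and bookkeeping).} Apply Lemma 5.3 with $\beta_0=s$, or $\beta_0=2$ followed by Hölder, so that the starting norm is $\big(\int_0^1\int v^{s}\big)^{1/s}\le (2Q\int_0^1e^{\tilde kt}dt)^{1/s}$. If $v=\sqrt{1+|A|^2}$, then $v^s\lesssim 1+|A|^s$, and the extra term is $\int_0^1 \mathrm{vol}_{g_t}(F_t^{-1}(B_1))\,dt\le V_0$ by Lemma 2.8; this is where the "$1+$" and the $V_0$-dependence of $\tilde E$ enter. The constant $\hat E$ depends on $X$ polynomially, $\hat E\le c(1+X)^{(1+\nu)n^2/s}$. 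Combining $(1+X)^{a}\,(\ldots)^{1/s}$ with $X\le (2Q\int_0^1 e^{\tilde kt}dt)^{2/s}$ and using $(1+y)^{a}y^{b}\le C(1+y^{a+b})$, we obtain the stated form with $\tilde s=\big(2(1+\nu)n^2+1\big)/s$, where $\nu=\frac{n+2}{s-(n+2)}$ depends only on $n,s$.

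\textbf{Main obstacle.} The main difficulty is the bookkeeping of the domains. Lemma 5.3 needs the $L^q$ bound on $f$ over the time-dependent preimages $F_t^{-1}(B_{r_0}(x_0))$, whereas Theorem 4.1 controls $B_{r/2}$ at radius $r\le1$ in terms of initial data on $B_{r+K}$. I would handle this with the covering argument above, absorbing the covering number (a dimensional constant) into $\tilde C_1$ and $\tilde E$, and checking that every $B_{1+K}(y_i)\subset B_{2+K}(x_0)$, so that every initial integral is dominated by the one over $\hat\Omega_0$.
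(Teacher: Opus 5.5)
Your proposal is correct and is essentially the paper's argument. Both use Moser iteration via Lemma 5.3 for a subsolution built from $|A|$ with potential of order $|A|^2$ in $L^{s/2}$, and both control $X$ and the starting norm by integrating the local estimate (4.10) over $t\in[0,1]$, with Lemma 2.8 handling the volume. The differences are cosmetic: the paper takes $v=|A|^2$, $f=2|A|^2$, $\beta_0=2$ and then applies H\"older from $L^4$ to $L^s$, which is where $V_0$ enters $\tilde E$; your covering by balls $B_{1/2}(y_i)$ with $y_i\in B_1(x_0)$ is more careful than the paper, which silently applies Theorem 4.1 with $r=2$, outside its stated range $r\le 1$; and your exponent bookkeeping should give $\tilde s=(2(1+\nu)n^2+s)/s^2$ rather than $(2(1+\nu)n^2+1)/s$, which is harmless since only $\tilde s=\tilde s(n,s)$ is claimed.
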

\begin{proof}
    We know that 
    \[ (\partial_t - \Delta)|A|^2 = -2|\nabla A|^2 + 2|A|^4 \leq 2|A|^2 \cdot |A|^2, \]
    this is of the form (5.6) with $f=2|A|^2$; using $|H|\leq\frac{1}{\sqrt{2}},|\nabla H|\leq 1$ and (4.10) we have $\lVert f\rVert_{q,F_{(\cdot)}^{-1}(B_r(x_0)),[t_0,T)}=\lVert |A(t)|^2 \rVert_{q,F_{(\cdot)}^{-1}(B_1(x_0)),[0,1)}<\infty$ for $q>\frac{n+2}{2} \iff \lVert A(t) \rVert_{s,F_{(\cdot)}^{-1}(B_1(x_0)),[0,1)} := X < \infty$ for $s>n+2$. Fix $\beta_0=2$, a direct application of (5.11) yields,
    \begin{align*}
        \sup_{t \in [\frac{1}{2},1]}\sup_{F_t^{-1}(B_{1/2}(x_0))} |A| &\leq \hat{E}\biggl(\int_0^1\int_{F_t^{-1}(B_1(x_0))} |A|^{4}(t) d\mu_t dt\biggr)^{\frac{1}{4}}\\
        &\leq (1+X)^{\frac{(1+\nu)n^2}{4}}\tilde{E}(n,s,T)\biggl(\int_0^1  \int_{F_t^{-1}(B_{1}(x_0))} |A|^4(t) d\mu_t dt\biggr)^{\frac{1}{4}}\\
        &\leq \biggl(1+\int_0^1\int_{F_t^{-1}(B_1(x_0))} |A|^s(t) d\mu_t dt\biggr)^\frac{(1+\nu)n^2}{4s} \tilde{E}(n,s,T)\biggl(\int_0^1  \int_{F_t^{-1}(B_{1}(x_0))} |A|^4(t) d\mu_t dt\biggr)^{\frac{1}{4}}
    \end{align*}
    where $\nu=\frac{n+2}{s-(n+2)}$. In order to express everything in terms of $L^s$ norms we use Hölder on the second integral above; also observe from Lemma 2.8 $\int_0^1 \text{vol}_{g_t}(F_t^{-1}(B_1(x_0)))dt\leq\int_0^1\text{vol}_{g_0}(F_0^{-1}(B_{2+K}(x_0)))dt$, then we obtain
    \begin{align*}
        \sup_{t \in [\frac{1}{2},1]}\sup_{F_t^{-1}(B_{1/2}(x_0))} |A|&\leq \biggl(1+\int_0^1\int_{F_t^{-1}(B_1(x_0))} |A|^s(t) d\mu_t dt\biggr)^\frac{(1+\nu)n^2}{4s} \tilde{E}(n,s,T)\biggl\{\bigg(\int_0^1\int_{F_t^{-1}(B_1(x_0))} d\mu_tdt\biggr)^{\frac{1}{4}-\frac{1}{s}}\\
        & \quad \quad \quad \quad \quad \quad \quad \quad \quad \quad \quad \quad \quad \quad \quad \quad \quad \quad \quad \cdot \biggl(\int_0^1 \int_{F_t^{-1}(B_{1}(x_0))} |A|^s(t) d\mu_t dt\biggr)^{\frac{1}{s}}\biggr\}\\
        &\leq \tilde{E}(n,s,T,V_0)\biggl(1+\int_0^1\int_{F_t^{-1}(B_1(x_0))}|A|^s(t)d\mu_tdt\biggr)^{\frac{(1+\nu)n^2 +4}{4s}}
    \end{align*}
we now use (4.10),
\begin{align*}
    \sup_{t \in [\frac{1}{2},1]}\sup_{F_t^{-1}(B_{1/2}(x_0))} |A|&\leq\tilde{E}(n,s,T,V_0)2^{\frac{(1+\nu)n^2+4}{4s}-1}\biggl\{1+\biggl(\int_0^1\int_{F_t^{-1}(B_1(x_0))}|A|^s(0)d\mu_tdt\biggr)^{\frac{(1+\nu)n^2+4}{4s}}\biggr\}\\
    &\leq \tilde{E}(n,s,T,V_0)\biggl\{\ 1+ \biggl(\int_{F_0^{-1}(B_{2+K}(x_0))}|A|^s(0)d\mu_0 + \tilde{C}_1V_0\biggr)^{\tilde{s}}\biggl(\int_0^1e^{\tilde{k}t}dt\biggr)^{\tilde{s}}\biggr\}
\end{align*}
where $\tilde{E}(n,s,T,V_0)$ is a constant. Defining $\tilde{s}:=\frac{(1+\nu)n^2+4}{4s}$ completes the proof of the local curvature bound.
\end{proof}
\begin{remark}
     Other than uniform space time bounds on $|H|,|\nabla H|$ we do not need any other global assumptions on the initial hypersurface, the hypersurface at later times. We can restrict our attention to the local domain in the initial manifold, defined precisely by: $\hat{\Omega}_0 := F_0^{-1}(B_{2+K}(x_0))$. The apriori bound on $|A|$ then requires finiteness of the $L^s$ norm $\lVert A(0) \rVert_{s,\hat{\Omega}_0} =: A_0 < \infty$ and finiteness of the local volume $\text{vol}_{g_0}(\hat{\Omega}_0) =: V_0 < \infty$. By virtue of equation (4.10), Lemma 2.8 the $L^s$-norm of $|A|$ at later times and the local volume at later times remain controlled by these quantities. Therefore, the validity of the estimate is purely a local property and is completely independent of the volume growth or curvature behavior outside of this domain for the entire duration of the flow.
\end{remark}
\begin{theorem}
    Let $F:M^n\times[0,T)\rightarrow\mathbb{R}^{n+1}$ be a complete, smooth, properly immersed mean curvature flow such that \[ \sup_{M^n\times[0,T)} |H|(x,t) \leq \frac{1}{\sqrt{2}}, \quad \quad \quad \quad \sup_{M^n\times[0,T)} |\nabla H|(x,t) \leq 1. \] Let $s>n+2, T> 1, \rho>2\sqrt{2}T$ be given. For all $x_0\in\mathbb{R}^{n+1}$ define $\hat{\Omega}_0(x_0):= F_0^{-1}(B_{\rho}(x_0))$, \[ A_0(x_0):=\lVert A(0) \rVert_{s,\hat{\Omega}_0(x_0)}, \quad \quad V_0(x_0):=\text{vol}_{g_0}(\hat{\Omega}_0(x_0))=\int_{F_0^{-1}(B_{\rho}(x_0))}d\mu_0.\] If $\mathcal{A}:=\sup_{x_0\in\mathbb{R}^{n+1}}A_0(x_0)<\infty$,$\mathcal{V}:=\sup_{x_0\in\mathbb{R}^{n+1}}V_0(x_0)<\infty$, then $\exists \kappa=\kappa(n,s,T,\rho,\mathcal{A},\mathcal{V})$ and a $T_0=T_0(n,\rho,\kappa)>0$ such that the flow admits a  complete, smooth, properly immersed extension on the interval $[T,T+T_0)$, which is unique among the class of such extensions with $\sup_{M^n\times[T/2,T+T_0)}|A|<\infty$.
\end{theorem}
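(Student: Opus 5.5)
The plan is to get a uniform bound $\sup_{M\times[T/2,T)}|A|\le\kappa$ from Theorem 5.4 applied at every centre and on rescaled time windows. The extension and uniqueness then follow from standard short-time theory for complete, properly immersed flows with bounded curvature.

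\emph{Step 1 (uniform $L^s$ control for all centres).} For every $x_0\in\mathbb{R}^{n+1}$ and every $t\in(0,T)$, apply Theorem 4.1 with $p=s$ and $r=1$. Since $K\le T/\sqrt2$ and $\rho>2\sqrt2T>1+K$, we have $F_0^{-1}(B_{1+K}(x_0))\subseteq\hat\Omega_0(x_0)$. Hence
\[\int_{F_t^{-1}(B_{1/2}(x_0))}|A|^s(t)\,d\mu_t\le 2e^{\tilde kT}\bigl(\mathcal{A}^s+\tilde C_1\mathcal{V}\bigr),\]
uniformly in $x_0$. The same bound holds for the radius-$1$ and radius-$2$ balls used in Theorem 5.4, because $2+K<\rho$. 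By Lemma 2.8 the local volumes $\text{vol}_{g_t}(F_t^{-1}(B_1(x_0)))$ are bounded by $\mathcal{V}$, again uniformly in $x_0$ and $t$.

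\emph{Step 2 (pointwise bound near $T$).} Theorem 5.4 controls $|A|$ only on $[\tfrac12,1]$, so I will translate in time. Fix $\tau\in[T/2,T)$ and take $t_0:=\tau-1\ge0$ (this needs $T\ge2$; otherwise rescale parabolically by $\lambda\le1$, which preserves $|H|\le1/\sqrt2$ and $|\nabla H|\le1$). Apply the Moser argument of Lemma 5.3 to the shifted flow $F(\cdot,t_0+\cdot)$. Its initial $L^s$ data on $F_{t_0}^{-1}(B_{2+K}(x_0))$ is controlled by Step 1, after covering $B_{2+K}$ by finitely many balls of radius $1/2$, with a number $N(n,\rho)$ of balls. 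Alternatively, feed the space-time $L^s$ norm over $[t_0,t_0+1]$ from Step 1 directly into Lemma 5.3 with $f=2|A|^2$. Either route gives
\[\sup_{t\in[\tau-\frac12,\tau]}\ \sup_{F_t^{-1}(B_{1/2}(x_0))}|A|\le\kappa(n,s,T,\rho,\mathcal{A},\mathcal{V}),\]
with $\kappa$ independent of $x_0$ and $\tau$. Taking the supremum over $x_0$ gives $\sup_{M\times[T/2,T)}|A|\le\kappa$.

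\emph{Step 3 (limit and restart).} With $|A|\le\kappa$, the interior estimates of Ecker--Huisken bound all derivatives $|\nabla^mA|$ on $[3T/4,T)$. Since $|\partial_tF|\le c_1$, the maps $F_t$ converge smoothly and locally uniformly to an immersion $F_T$. Equivalence of the metrics $g_t$ (from $|\partial_tg|\le2|H||A|\le C\kappa$) shows $F_T$ is complete. Properness is preserved because $|F_T-F_0|\le Tc_1$. The hypersurface $F_T$ then has $|A|\le\kappa$ together with bounded higher derivatives. By Ecker--Huisken/Chen--Yin short-time existence for complete immersions with bounded second fundamental form, there is a solution on $[T,T+T_0)$ with $T_0=T_0(n,\kappa)$. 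The dependence on $\rho$ enters through the doubling-time estimate $\partial_t|A|^2\le\Delta|A|^2+2|A|^4$. Concatenating gives a smooth flow, since all time derivatives match at $T$ by the evolution equation. Uniqueness in the class $\sup|A|<\infty$ follows from Chen--Yin's uniqueness theorem.

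The main obstacle is Step 2: making Theorem 5.4 apply uniformly up to $T$. Theorem 5.4 is stated only on the window $[\tfrac12,1]$ with data at time $0$, and the constants must not degenerate as the window approaches $T$. I would first try the direct route: use the space-time bound of Step 1 as the quantity $X$ in Lemma 5.3, rather than re-deriving initial data at $t_0$, so that no rescaling beyond translation is needed. I would then check that the cutoff construction of Lemma 2.11 only uses $|H|\le1/\sqrt2$ on the translated cylinder.
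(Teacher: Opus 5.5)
Your proposal is correct. Its second half matches Steps 3--7 of the paper: Ecker's interior estimates for $\nabla^mA$, the smooth limit $F_T$ with equivalent metrics and preserved properness, short-time existence from $F_T$, and Chen--Yin uniqueness.

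Where you genuinely differ is in obtaining $\sup_{M\times[T/2,T)}|A|\le\kappa$. The paper bootstraps. It applies Theorem 5.4 on $[0,1]$ and turns the resulting pointwise bound into $L^s$ data at the next half-integer time by covering $B_\rho$ with smaller balls. It then reapplies Theorem 5.4 on $I_j=[\tfrac j2,\tfrac{j+2}2]$ and closes with a backward window on $[\tfrac{N+2}{2},T)$. You instead note that Theorem 4.1 already bounds $\int_{F_t^{-1}(B_{1/2}(x_0))}|A|^s(t)\,d\mu_t$ by $2e^{\tilde kT}(\mathcal{A}^s+\tilde C_1\mathcal{V})$ for every $t<T$ and every centre. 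Hence a single Moser iteration (Lemma 5.3 with $f=2|A|^2$, $q=s/2$, and $X$ the space-time norm) on any translated unit window $[\tau-1,\tau]$ gives the pointwise bound. The constants are visibly independent of $\tau$ and $x_0$.

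What your route buys: it removes the induction over windows, the regeneration of initial data from pointwise bounds, and the tracking of constants across roughly $2T$ restarts. The paper asserts that $\kappa$ is independent of $j$ but does not really track this. What the paper's route buys: it uses Theorem 5.4 as a black box with fresh initial data on short windows. That is no real advantage here, because the $L^s$ estimate of Theorem 4.1 does not degenerate before $T$.

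Two minor repairs:
\begin{itemize}
\item Theorem 4.1 is stated only for $r\le1$. To get the radius-$1$ bound you need for $X$, cover $B_1(x_0)$ by $N(n)$ balls $B_{1/2}(y_i)$. This is legitimate because $\mathcal{A}$ and $\mathcal{V}$ are suprema over all centres and $1+K<\rho$.
\item The parabolic rescaling for $T<2$ is unnecessary. In the paper's convention $\tilde F=\lambda(F-x_0)$ you would in any case need $\lambda\ge1$, not $\lambda\le1$, to lengthen the time interval while keeping $|\tilde H|\le 1/\sqrt2$. More simply, any $\tau\in[T/2,1)$ already lies in $[\tfrac12,1]$ and is covered by the window $[0,1]$ itself.
\end{itemize}
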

\begin{proof}
 \textit{Step 1.} Let $\rho>2\sqrt{2}T$ be given. For any $x_0\in\mathbb{R}^{n+1}$, using $|H|\leq\frac{1}{\sqrt{2}},|\nabla H|\leq 1$, a direct application of Theorem 5.4 on $I_0=[0,1]$ along with $A_0(x_0), V_0(x_0)$ and $F_0^{-1}(B_{\rho}(x_0))$, gives (5.12). Moreover, we obtain $$\sup_{F_1^{-1}(B_{\rho/4}(x_0))}|A|(\cdot,1) \leq \Lambda(n,s,T,\mathcal{A},\mathcal{V}).$$ Any ball $B_{\rho}(x_0)$ can be covered by finitely many $B_{\rho/4}(y_i)$ with $y_i\in B_{\rho}(x_0)$, on each of which the same uniform pointwise estimate holds. Additionally, Lemma 2.8 via the uniform bound on the mean curvature yields for any $t<T$, $\int_{F_t^{-1}(B_{\rho}(x_0))}d\mu_t \leq \int_{F_0^{-1}(B _{\rho + T|H|}(x_0))}d\mu_0\leq V_{max}$ where $V_{max}\leq \tilde{N}\mathcal{V}, \tilde{N}=\tilde{N}(\rho,T,|H|)$. Combining all this we get $$\int_{F_1^{-1}(B_{\rho}(x_0))}|A|^s(\cdot,1)d\mu_1\leq\Lambda^s\hat{C}(n,s,\tilde{N},\mathcal{V}).$$ Applying Theorem 5.4 on the next sub-interval $I_1=[1/2,3/2]$ along with the above $L^s$ bound, we conclude $\sup_{t\in[1,3/2]}\sup_{F_t^{-1}(B_{\rho/4}(x_0))}|A|\leq \kappa(n,s,T,\rho,\mathcal{A},\mathcal{V})$. By defining $t_j:=\frac{j}{2}, j\in\mathbb{N}_0$, $I_j = [t_j, t_{j+2}] = \left[ \frac{j}{2}, \frac{j+2}{2} \right]$ and $N(T) = \max \{ j \in \mathbb{N}_0 : \frac{j+2}{2} < T \}$ such that for all $j\in\{0,...,N\}, \: I_j\subset [0, T)$, we can repeat the entire argument on each $I_j$ and $\int_{F_{t_{j+1}}^{-1}(B_{\rho}(x_0))}|A|^s(\cdot,t_{j+1})d\mu_{t_{j+1}}< \infty$ for finitely many steps. The constants $\kappa, \Lambda$ and  $\hat{C}$ are independent of $x_0,j$.\\
 \\
 \textit{Step 2.} It remains to treat $[\frac{N+2}{2}, T)$. For any $t$ in this interval: by definition $N\geq 0$, this means that $t\geq\frac{N+2}{2}\geq 1$ and we observe that the backward sub-interval $I_t^* := [t - 1, t]\subset[0, T)$. The \(L^s\)-bound over the local domain $F_{t-\frac{1}{2}}^{-1}(B_{\rho}(x_0))$ at time \(t-\frac12\) is supplied by the previous step because \(t-\frac12\geq \frac{N+1}{2}\). Thus, Theorem 5.4 applies and the estimates guarantee $$\sup_{t \in [1/2, T)} \sup_{F_t^{-1}(B_{\rho/4}(x_0))} |A| \leq \kappa< \infty.$$ \textit{Step 3.} We can use the local interior estimate from proposition 3.22 of \cite{Ecker2004} - this holds for complete smooth properly immersed due to an identical localization argument - to get uniform bounds on all higher covariant derivatives of $|A|$: $$\sup_{[1/2,T)}\sup_{F_t^{-1}(B_{\rho/8}(x_0))} |\nabla^m A| \leq C(m,n,\rho,\kappa),\forall m\geq 1.$$ Now fix $x\in M$, choose $t_0\in(T/2,T)$ and set $x_0:=F(x,t_0)$. Then, for every $t\in(T/2,T)$, the uniform bound on the mean curvature gives us $|F(x,t)-x_0|\leq|H||t-t_0|<\frac{T}{2\sqrt{2}}$. Since $\rho>2\sqrt{2}T$, it is true that $|F(x,t)-x_0|\leq\frac{\rho}{8}, x\in F_t^{-1}(B_{\rho/8}(x_0))$ and so all the above estimates hold. Furthermore, the metric is uniformly equivalent in time $e^{-\sqrt{2}\kappa|t_1-t_2|}g(t_1)\leq g(t_2) \leq e^{\sqrt{2}\kappa|t_1-t_2|}g(t_1),\:\forall t_1,t_2\in(T/2,T)$. \\
 \\
 \textit{Step 4.} Let $K\Subset M$ be compact. The preceding estimates lead to uniform $C^k(K)$ bounds for $F_t|_K$ for every $k\geq0$. We also get equicontinuity of $F_t|_K$ in time. Since $K$ was arbitrary, by Arzelà--Ascoli and the uniqueness of the $C^0(K)$ limit, $F_t|_K$ converges smoothly as $t\rightarrow T$ to a smooth limiting immersion $F_T:M\rightarrow\mathbb{R}^{n+1}$ having the same curvature bound \begin{equation}
     \sup_{F_T^{-1}(B_{\rho/8}(x_0))}|A|\leq\kappa.
 \end{equation}
 \textit{Step 5.} In particular, $F_T$ fulfills the uniform local Lipschitz condition required by Theorem 4.2 of \cite{Ecker1991} and so there exists a smooth immersed mean curvature flow \begin{align}
 \tilde{F}:M\times[0,T_0)&\rightarrow\mathbb{R}^{n+1}\notag \\
 \tilde{F}(\cdot,0)&=F_T
 \end{align} 
 where $T_0=T_0(n,\rho,\kappa)>0$ is the time of existence as produced by an exhaustion argument in the proof of Theorem 4.2 via Proposition 4.1 of \cite{Ecker1991}. Although their result states only Hölder continuity at $t=0$ for general Lipschitz initial data, since $F_T$ here is smooth with uniform bounds on $|A|,|\nabla^mA|$, this implies that $\tilde{F}$ is smooth up to and including the time $t=0$. We then define: \[ \hat{F}(x,t):= \begin{cases}
     F(x,t) \quad &t\in[0,T),\\
     \tilde{F}(x,t-T) \quad &t\in[T,T+T_0).
 \end{cases}\] 
\textit{Step 6.} Theorem 1.1 from \cite{ChenYin2007} asserts that $\hat{F}$ is the unique smooth extension of the flow $F$ on the interval $[T,T+T_0)$ with bounded second fundamental form on $[T/2, T+T_0)$: From (5.13), (5.14) and the fact that \(F_T\) is smooth, we obtain via Theorem 4.2 and Proposition 4.1 of \cite{Ecker1991}
\[
\sup_{M\times[0,T_0)}|\tilde A|\leq C(n,\rho,\kappa),
\]
and
\[
\sup_{M\times[T/2,T+T_0)}|\hat A|(\cdot,t)
\leq
\max\{\kappa,C(n,\rho,\kappa)\}<\infty.
\]
Therefore $\hat{F}$ belongs to the uniqueness class of \cite{ChenYin2007}; it is the unique, smooth extension of $F$ on $[T,T+T_0)$ with bounded $|A|$. See also \cite{han2026wellposedness} for a new proof of existence and uniqueness of immersed MCFs with bounded $|A|$.\\
\\
\textit{Step 7.} These solutions inherit the properness of the initial immersion due to the uniform mean curvature bound: $\forall t \in [0, T+T_0)$, for any $Q \subset \mathbb{R}^{n+1}$ compact, let $y\in \mathbb{R}^{n+1}$ and $R>0$ such that $Q \subseteq \overline{B}_R(y)$, then by Proposition 2.6 we have the containment$$ F_t^{-1}(Q) \subseteq F_t^{-1}(\overline{B}_R(y)) \subseteq F_0^{-1}(\overline{B}_{R+\frac{T+T_0}{\sqrt{2}}}(y)). $$ Since $F_0$ is a proper immersion, the set $F_0^{-1}(\overline{B}_{R+\frac{T+T_0}{\sqrt{2}}}(y))$ is a compact subset of $M$. By continuity of $F_t$ the preimage $F_t^{-1}(Q)$ is a closed subset of this compact set, and hence compact; it follows that a smooth proper immersion with bounded second fundamental form is complete.
\end{proof}

\bibliographystyle{alpha}
\bibliography{mcfextn}

\end{document}